\numberwithin{equation}{section}
\newtheorem{theorem}{Theorem}[section]
\newtheorem{assumption}{Assumption}[section]
\newtheorem{proposition}[theorem]{Proposition}
\newtheorem{lemma}[theorem]{Lemma}
\newtheorem{corollary}[theorem]{Corollary}
\theoremstyle{definition}
\newtheorem{definition}[theorem]{Definition}
\newtheorem{example}[theorem]{Example}
\theoremstyle{remark}
\newtheorem{remark}[theorem]{Remark}
\def\R{{\mathbb R}}
\newcommand{\cC}{{\mathcal C}}
\newcommand{\GG}{\mathbb{G}}
\newcommand{\sign}{\operatorname{sign}}
\newcommand{\trace}{\operatorname{tr}}
\newcommand{\bb}{\mathbb}
\newcommand{\cc}{\mathcal}
\newcommand{\Lipb}{{\operatorname{Lip}_b}}
\newcommand{\Span}{\operatorname{Span}}
\newcommand{\He}{\operatorname{He}}
\newcommand{\wt}{\widetilde}
\newcommand{\bx}{\mathbf{x}}
\newcommand{\by}{\mathbf{y}}
\newcommand{\bz}{\mathbf{z}}
\newcommand{\bG}{{\operatorname{G}}}
\newcommand{\bsu}{\operatorname{SU}}
\newcommand{\bsl}{\operatorname{SL}}
\renewcommand{\le}{\leqslant}
\renewcommand{\ge}{\geqslant}
\begin{document}

\title[Dimension-independent functional inequalities]{Dimension-independent functional inequalities by tensorization and projection arguments}

\author[Baudoin]{Fabrice Baudoin{$^{\ddag}$}}
\address{ Department of Mathematics\\
Aarhus University, Denmark}
\thanks{\footnotemark {$^{\ddag}$} Research was supported in part by NSF Grant DMS-2247117 when most of the work was completed.}
\email{fbaudoin@math.au.dk}

\author[Gordina]{Maria Gordina{$^{\dag}$}}
\address{ Department of Mathematics\\
University of Connecticut\\
Storrs, CT 06269,  U.S.A.}
\thanks{\footnotemark {$\dag$} Research was supported in part by NSF Grants DMS-2246549.}
\email{maria.gordina@uconn.edu}

\author[Sarkar]{Rohan Sarkar{$^{\dag }$}}
\address{ Department of Mathematics\\
University of Connecticut\\
Storrs, CT 06269,  U.S.A.}
\email{rohan.sarkar@uconn.edu}

\keywords{reverse Poincar\'e inequality, reverse logarithmic Sobolev inequality, Wang-Harnack inequality, Li-Yau inequality, Hellinger distance, Kantorovich-Wasserstein distance, sub-Riemannian manifold, hypoelliptic heat kernel, Lie groups.}

\subjclass[2020]{Primary 58J35; Secondary 22E30, 28A33, 35A23, 35K08. }

\date{\today \ \emph{File:\jobname{.tex}}}

\begin{abstract}
We study stability under tensorization and projection-type operations of gradient estimates and other functional inequalities for Markov semigroups on metric spaces. Using transportation inequalities obtained by F.~Baudoin and N.~Eldredge in 2021, we prove that constants in the gradient estimates can be chosen to be dimension-independent. Our results are applicable to hypoelliptic diffusions on sub-Riemannian manifolds and some hypocoercive diffusions. As a byproduct, we obtain dimension-independent reverse Poincar\'{e} inequality, reverse logarithmic Sobolev inequality, and gradient bounds on Lie groups with transverse symmetries and for non-isotropic Heisenberg groups.
\end{abstract}

\maketitle
\tableofcontents

\section{Introduction}

This paper studies stability of functional inequalities satisfied by Markov semigroups on products of metric measure spaces. We focus on three key functional inequalities: gradient bounds, reverse Poincar\'{e} and reverse logarithmic Sobolev  inequalities. These inequalities play a crucial role in establishing various analytical properties for the underlying Markov semigroup, including Liouville-type properties, Harnack-type inequalities, convergence towards the equilibrium distribution (if it exists), quasi-invariance, and more. In this context, the past three decades have witnessed significant progress in proving these functional inequalities on general metric measure spaces equipped with a Dirichlet form, see \cite{VaropoulosBook1992, CoulhonJiangKoskelaSikora2020} and references therein. These results are often interpreted within the framework of Gamma calculus introduced by Bakry and \'{E}mery \cite{BakryEmery1985} and curvature-dimension inequalities. Such techniques are widely applicable in the setting of Riemannian manifolds and elliptic diffusions.
	
However, for hypoelliptic diffusions several fundamental issues arise due to lack of such geometric methods in general, and in particular, for not having a Dirichlet form corresponding to the hypoelliptic differential generator. Such difficulties have been tackled by extending the work of Bakry-\'{E}mery to hypoelliptic settings, see \cite{Baudoin2017a} in the context of obtaining Villani's \cite{Villani2009} hypocoercivity estimates, \cite{BaudoinGordinaMelcher2013, BaudoinGordinaMariano2020} for gradient estimates of Kolmogorov diffusions, \cite{BaudoinGordinaHerzog2021} for Langevin dynamics with singular potentials, \cite{BaudoinBonnefont2009} for gradient estimates of sub-elliptic heat kernels on $\operatorname{SU}(2)$, and using coupling for gradient estimates on the Heisenberg group \cite{BanerjeeGordinaMariano2018}. One of the key tools in such a context is the generalized curvature-dimension condition, which implies reverse Poincar\'e and reverse logarithmic Sobolev inequalities, Li-Yau type gradient estimates for the associated Markov semigroups. For a detailed account on such techniques, we refer to \cite{BaudoinBonnefont2012, BaudoinGarofalo2017}. In general, the results obtained through these methods may lead to dimension-dependent functional inequalities. Besides geometric and probabilistic techniques such as coupling, functional inequalities in such degenerate settings can be approached by using the structure of the underlying spaces as in \cite{CamrudGordinaHerzogStoltz2022, DriverMelcher2005, Gordina2017, EldredgeGordinaSaloff-Coste2018, LiHong-Quan2006, LiHong-QuanZhang2019, GordinaLuo2022, GordinaLuo2024}. 

The present work is closely related to \cite{GordinaLuo2022, GordinaLuo2024}, where the authors employ tensorization and projection-type arguments to derive a dimension-independent logarithmic Sobolev  inequality on non--isotropic Heisenberg groups. They further extend these ideas to homogeneous spaces, utilizing the tensorization property of the Dirichlet form associated with the heat semigroup on such manifolds to establish their results. As mentioned earlier, gradient bounds, reverse Poincar\'{e} and reverse logarithmic Sobolev inequalities cannot be deduced in such a way when we do not have a natural Dirichlet form associated to the hypoelliptic operator. To address this issue, we adopt a different approach by invoking the duality results for these functional inequalities developed in \cite{Kuwada2010a, Kuwada2013b} and \cite{BaudoinEldredge2021}. While the first two papers focus on the relationship between gradient bounds and Wasserstein metric, the latter provides equivalent formulations of reverse Poincar\'e and reverse logarithmic Sobolev inequalities through the lens of Wasserstein and Hellinger metrics, along with some entropy inequalities. This approach enables us to extend our results beyond manifolds, specifically to any path-connected metric measure space, also known as \emph{length space}. Using simple inequalities involving the Wasserstein and Hellinger metrics, we show that \eqref{eq:GB}, \eqref{eq:RP}, and \eqref{eq:RLS} in Theorem~\ref{thm:main} extend to the product space, and the constants in these inequalities can be chosen not to depend on the number of spaces in the product. As a byproduct of our results, we show that functional inequalities including Li-Yau type gradient estimates and parabolic Harnack inequalities can be easily deduced for sub-Riemannian manifolds obtained through tensorization and \emph{sub-Riemannian submersion} as defined in  Definition~\ref{def:submersion}. In Section~\ref{sec:3} we consider $(2n+m)$-dimensional Lie groups with transverse symmetry, which is a sub-Riemannian manifold with transverse symmetry introduced in \cite{BaudoinGarofalo2017}. In Theorem~\ref{thm:transverse} we show that in the context of Lie groups with transverse symmetry, our approach yields functional inequalities sharper than those obtained using the curvature-dimension criterion in \cite{BaudoinGarofalo2017}. Our examples include Kolmogorov diffusions on $\mathbb{R}^d\times \mathbb{R}^d$, kinetic Fokker-Planck operators, Carnot groups of step $2$ including non-isotropic Heisenberg groups, the orthogonal groups $\operatorname{SO}(3), \operatorname{SO}(4)$, and two sub-Riemannian manifolds which are not Lie groups: the Heisenberg nil-manifold and the Grushin plane.
	
The  paper is organized as follows. In Section~\ref{sec:1} we describe the framework for our main results. Section~\ref{sec:2} presents preliminaries on sub-Riemannian manifolds, along with the implications of Theorem~\ref{thm:main} within the context of sub-Riemannian geometry. Section~\ref{sec:4} presents examples including Section~\ref{sec:3} devoted to Lie groups with transverse symmetry.
	
\section{Tensorization of functional inequalities on metric measure spaces}\label{sec:1}
	
Let $\left(X, d \right)$ be a complete, locally compact, separable metric space which is a length space. In particular, by the Hopf-Rinow theorem for length spaces (see e.g. \cite[p.~9]{GromovBook2007}), each pair of points can be joined by a minimizing geodesic, i.~e. a rectifiable curve whose length is the distance between the points.

We assume that $X$ is equipped with a \emph{strong upper gradient}, which implies that for any measurable function $f:X \to \bb{R}$ we have the upper Lipschitz constant of $f$ defined by
\begin{align}
|\nabla f|(x)=\lim_{r\downarrow 0} \sup_{y: d(x,y)<r}\frac{|f(x)-f(y)|}{d(x,y)}.
\end{align}
We refer to \cite{Cheeger1999} for some basic properties of length spaces and strong upper gradients, and for more details \cite[p.~2]{AmbrosioGigliSavareBook2005} and \cite[Section 6.2]{HeinonenKoskelaShanmugalingamTysonBook2015}. We denote by $\Lipb(X)$ the Banach space of all Lipschitz functions on $X$ endowed with the Lipschitz norm 

\begin{align*}
\|f\|_{\Lipb(X)}=\sup_{x\in X}|f(x)|+\sup_{x\neq y}\left|\frac{f(x)-f(y)}{d(x,y)}\right|,
\end{align*}
by $\cc{B}_X$ we denote the Borel $\sigma$-algebra on $(X, d)$, and by $\cc{P}(X)$ the space of all probability measures on $\left( X, \cc{B}_X \right)$. For a Markov kernel $P: X\times\cc{B}_X\to [0,1]$, we denote by $Pf$ and $\mu P$ the usual action of $P$ on bounded Borel functions and probability measures, that is, 
\begin{align*}
& Pf(x):=\int_X f(y) P(x, dy), 
\\
& \mu P(A):=\int_X P(x, A)\mu(dx).
\end{align*}
We are interested in the following functional inequalities for $P$.
\begin{enumerate}[leftmargin=*]
\item \emph{Gradient bound:} for $1\leqslant p\leqslant 2$ and for all $f\in\Lipb(X)$ 
\begin{align}\label{eq:GB}
		|\nabla P f| \leqslant C (P|\nabla f|^p)^{\frac{1}{p}}.\tag{$\operatorname{GB}_p$}
\end{align}
\item \emph{Reverse Poincar\'e inequality:} for all $f\in \Lipb(X)$
	\begin{align} \label{eq:RP}
		|\nabla Pf|^2\leqslant C(P(f^2)-(Pf)^2). \tag{$\operatorname{RPI}$}
	\end{align} 
\item \emph{Reverse logarithmic Sobolev  inequality:} for all positive $f\in \Lipb(X)$
	\begin{align}\label{eq:RLS}
		Pf |\nabla \log Pf|^2 \leqslant C(P(f\log f)- Pf\log(Pf)) \tag{$\operatorname{RLSI}$},
	\end{align}
	\end{enumerate}
where $C$ is a positive constant and may differ for each inequality.

We now consider a collection of complete separable metric length spaces $\left\{ (X_i, d_i) \right\}_{i=1}^n$ and let $X:=X_1\times \cdots\times X_n$ be endowed with the  metric $d$ defined by
\begin{align}\label{eq:square_metric}
	d(x, y)^2:= \sum_{i=1}^n d_i (x_i, y_i)^2.
\end{align}
It is known that $(X,d)$ is a complete separable metric length space. For Markov kernels $P_i$ defined on $(X_i, d_i)$, the tensor product $P$ is defined as the Markov operator on $X=X_1\times\cdots\times X_n$ such that for all $f \in B_b(X)$
\begin{align*}
& Pf:=P_1\otimes\cdots\otimes P_n f(x_1,\ldots, x_n)
\\
& =\int_{X} f(z_1,\ldots, z_n) P_1(x_1, dz_1)\cdots P_n(x_n, dz_n).
\end{align*}
We are now ready to state the main result of this section.
\begin{theorem}\label{thm:main}
Let $P_i$ be a Markov kernel on $(X_i, d_i), i=1, ..., n$ that satisfies \eqref{eq:GB} (resp. \eqref{eq:RP}, \eqref{eq:RLS}) with a constant $C_i$. Then the Markov kernel $P:=\otimes_{i=1}^n P_i$ satisfies \eqref{eq:GB} (resp. \eqref{eq:RP}, \eqref{eq:RLS}) with constant $C=\max\left\{C_i: 1 \leqslant i\leqslant n \right\}$.
\end{theorem}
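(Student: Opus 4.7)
The plan is to iterate on $n$ and reduce to the case $n = 2$. Writing $P = P_1 \otimes P_2$, one has for every $x = (x_1,x_2) \in X$ the factorization $\delta_x P = (\delta_{x_1} P_1) \otimes (\delta_{x_2} P_2)$. The strategy is to dualize each of the three inequalities into a transportation-type statement between the push-forward measures, tensorize at the level of these functionals, and then dualize back.

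For \eqref{eq:GB}, I would invoke the duality of Kuwada \cite{Kuwada2010a, Kuwada2013b}, which identifies \eqref{eq:GB} for $P_i$ with the Wasserstein contraction $W_q(\delta_{x_i} P_i, \delta_{y_i} P_i) \leqslant C_i\, d_i(x_i, y_i)$, where $q \in [2,\infty]$ is the Hölder conjugate of $p$. For \eqref{eq:RP} and \eqref{eq:RLS}, the equivalences from \cite{BaudoinEldredge2021} reinterpret the inequalities, up to universal constants, as bounds of the form $H^2(\delta_{x_i} P_i, \delta_{y_i} P_i) \leqslant c\, C_i\, d_i(x_i, y_i)^2$ for the squared Hellinger distance, respectively $\mathrm{Ent}(\delta_{y_i} P_i\,|\,\delta_{x_i} P_i) \leqslant c\, C_i\, d_i(x_i, y_i)^2$ for the relative entropy.

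The central observation is that each of these three transport functionals tensorizes subadditively when the product is given the Euclidean metric \eqref{eq:square_metric}. Applied to the product of optimal couplings, Minkowski's integral inequality in $L^{q/2}$ gives, for every $q \in [2,\infty]$,
$$W_q(\mu_1 \otimes \mu_2,\, \nu_1 \otimes \nu_2)^2 \leqslant W_q(\mu_1, \nu_1)^2 + W_q(\mu_2, \nu_2)^2.$$
The Bhattacharyya-affinity factorization
$$1 - H^2(\mu_1 \otimes \mu_2,\, \nu_1 \otimes \nu_2) = \bigl(1 - H^2(\mu_1, \nu_1)\bigr)\bigl(1 - H^2(\mu_2, \nu_2)\bigr)$$
yields $H^2(\mu_1 \otimes \mu_2, \nu_1 \otimes \nu_2) \leqslant H^2(\mu_1, \nu_1) + H^2(\mu_2, \nu_2)$, while relative entropy tensorizes exactly. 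Substituting the individual hypotheses and using $d(x,y)^2 = d_1(x_1,y_1)^2 + d_2(x_2,y_2)^2$, one obtains for instance
$$W_q(\delta_x P, \delta_y P)^2 \leqslant C_1^2\, d_1(x_1,y_1)^2 + C_2^2\, d_2(x_2,y_2)^2 \leqslant \max(C_1, C_2)^2\, d(x,y)^2,$$
and the parallel Hellinger and entropy bounds. Reversing the dualization then produces \eqref{eq:GB} (resp. \eqref{eq:RP}, \eqref{eq:RLS}) for $P$ with constant $\max(C_1, C_2)$, and induction on $n$ closes the proof.

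The main point to watch is the bookkeeping of constants: to recover the sharp value $\max(C_i)$ rather than a lossy $\sum C_i$ or a $\sqrt{2}\,\max(C_i)$, the characterizations of \eqref{eq:RP} and \eqref{eq:RLS} taken from \cite{BaudoinEldredge2021} must be stated so that the transported quantity scales linearly with $d^2$, aligning exactly with the Euclidean product metric \eqref{eq:square_metric} and with the $\ell_2$-additivity produced by the Minkowski tensorization of $W_q$. Any mismatch in normalization would introduce a multiplicative factor in one of the three inequalities and inflate the product constant beyond $\max(C_i)$.
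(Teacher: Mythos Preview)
Your proposal is essentially correct and follows the same strategy as the paper: dualize each inequality via \cite{Kuwada2010a,Kuwada2013b} and \cite{BaudoinEldredge2021}, tensorize the resulting transport-type quantities, and dualize back. For \eqref{eq:GB} and \eqref{eq:RP} your route is identical to the paper's; Lemma~\ref{lem:wass_hell} records exactly your Wasserstein and Hellinger subadditivity, proved by the same Minkowski/$L^{q/2}$ argument and the same Bhattacharyya factorization.

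The one genuine difference is your treatment of \eqref{eq:RLS}. You propose passing through the relative-entropy characterization and using exact additivity of $\mathrm{Ent}$ under tensor products. The paper instead invokes the Wang--Harnack equivalence \cite[Theorem~5.15]{BaudoinEldredge2021} and iterates the Wang--Harnack inequality one coordinate at a time, freezing the remaining variables at each step. Both routes land on the sharp constant $\max_i C_i$: yours is cleaner conceptually (entropy is exactly additive, no iteration needed), while the paper's is more self-contained because the Wang--Harnack $\Leftrightarrow$ \eqref{eq:RLS} equivalence with identical constants is the form explicitly stated in \cite{BaudoinEldredge2021}. Your closing caveat about normalization is exactly the right concern, and the paper's choice sidesteps it by staying with a characterization whose constant-preserving equivalence is already on record.
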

By \cite[Theorem~3.5]{BaudoinEldredge2021} we know that \eqref{eq:RP} is equivalent to a Harnack-type inequality, thus we obtain the following corollary.
\begin{corollary}
Let $P_i$ be a Markov kernel on $(X_i, d_i), i=1, ..., n$ such that for all bounded non-negative $f \in B_b(X_i)$ and $x_i, y_i\in X_i$,
	\begin{align*}
		Pf(x_i)\leqslant Pf(y_i)+\sqrt{C_i} d_i(x_i, y_i)\sqrt{P(f^2)(x_i)}
	\end{align*}
for some constant $C_i>0$. Then for all $f\in B_b(X)$ and $x, y \in X$ we have
\begin{align}
	Pf(x)\leqslant Pf(y)+\sqrt{C} d(x, y)\sqrt{P(f^2)(x)}
\end{align}
with $C=\max\{C_i: 1\leqslant i\leqslant n\}$.
\end{corollary}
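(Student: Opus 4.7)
The plan is to deduce the corollary by a two-step reduction through the reverse Poincaré inequality, using the equivalence of Baudoin--Eldredge together with the tensorization already established in Theorem~\ref{thm:main}.

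\textbf{Step 1: Pass from Harnack to \eqref{eq:RP} on each factor.} By \cite[Theorem~3.5]{BaudoinEldredge2021}, the Harnack-type inequality
\begin{align*}
P_i f(x_i)\leqslant P_i f(y_i)+\sqrt{C_i}\, d_i(x_i, y_i)\sqrt{P_i(f^2)(x_i)}
\end{align*}
is equivalent to the reverse Poincaré inequality \eqref{eq:RP} on $(X_i, d_i)$ with the same constant $C_i$. Hence each $P_i$ satisfies \eqref{eq:RP} with constant $C_i$.

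\textbf{Step 2: Tensorize.} Apply Theorem~\ref{thm:main} to conclude that the product kernel $P=\otimes_{i=1}^n P_i$ satisfies \eqref{eq:RP} on $(X,d)$ with constant $C=\max\{C_i:1\leqslant i\leqslant n\}$.

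\textbf{Step 3: Pass back to Harnack on $X$.} Invoking the equivalence from \cite[Theorem~3.5]{BaudoinEldredge2021} once more, now applied on the product length space $(X,d)$ (which is indeed a complete separable length space, as noted after \eqref{eq:square_metric}), yields the desired inequality
\begin{align*}
Pf(x)\leqslant Pf(y)+\sqrt{C}\, d(x, y)\sqrt{P(f^2)(x)}
\end{align*}
for all $f\in B_b(X)$ and $x,y\in X$, with the same constant $C$.

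The only thing to verify is that the hypotheses of the Baudoin--Eldredge equivalence apply to $(X,d)$; this is the one place where care is needed, but it is immediate since $(X,d)$ inherits the length space and completeness properties from the factors, and \eqref{eq:RP} is first extended from $\Lipb(X)$ to bounded Borel functions in the standard way via approximation. No genuine obstacle arises: the corollary is essentially a transparent corollary of Theorem~\ref{thm:main} together with the cited duality.
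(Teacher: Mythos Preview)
Your proposal is correct and is exactly the argument the paper has in mind: the corollary is stated immediately after the sentence ``By \cite[Theorem~3.5]{BaudoinEldredge2021} we know that \eqref{eq:RP} is equivalent to a Harnack-type inequality, thus we obtain the following corollary,'' and no further proof is given. Your three steps simply spell out that one-line remark.
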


\subsection{Transportation inequalities and tensorization}
Our goal in this section is to prove Theorem~\ref{thm:main} using transportation inequalities. 
Suppose as before that $\left(X, d \right)$ is  a complete, locally compact, separable length space. 
For any $1 \leqslant p\le \infty$, the \emph{$L^{p}$-Wasserstein distance} $W_p(\mu, \nu)$ between $\mu, \nu\in\cc P(X)$ is defined as 
\begin{align}\label{eq:W_p_def}
W_p(\mu, \nu):=\inf_{\pi\in \mathcal{C}\left( \mu, \nu \right)} \|d\|_{L^p(X\times X, \pi)},
\end{align}
where $\mathcal{C}\left( \mu, \nu \right)$ is the collection of all possible couplings of $\mu,\nu\in\cc P(X)$. $W_p$ defines a metric on the $\cc P(X)$, although $W_p$ can be infinite. When $W_p(\mu,\nu)<\infty$, the infimum in \eqref{eq:W_p_def} is attained, that is, there exist $X$-valued random variables $U, V$ such that $U\sim \mu, V\sim \nu$ and $\mathbb{E}(d(U,V)^p)=W_{p}(\mu, \nu)^p$. We refer to \cite[Chapter 6]{VillaniOptimalTransportBook} for a discussion about Wasserstein distances and Wasserstein spaces.

Next we introduce the \emph{Hellinger distance} between $\mu,\nu\in \cc{P}(X)$ defined by
\begin{align*}
\He_2(\mu, \nu)^2:=\int_X\left(\sqrt{\frac{d\mu}{dm}}-\sqrt{\frac{d\nu}{dm}}\right)^2 dm,
\end{align*}
for some measure $m$ with respect to which both $\mu,\nu$ are absolutely continuous. This definition is independent of $m$ and in particular one can take $m=(\mu+\nu)/2$. Before going into the proof of Theorem~\ref{thm:main}, we prove the following lemma which will be crucial subsequently.

\begin{lemma}\label{lem:wass_hell}
For each $i=1,\ldots, n$, let $\mu_i, \nu_i \in \cc{P}(X_i)$. Then 

\begin{align}
&
W_p(\mu_1\otimes\cdots\otimes \mu_n, \nu_1\otimes \cdots\otimes\nu_n)^2\leqslant \sum_{i=1}^n W_p(\mu_i,\nu_i)^2,  \ p \geqslant 2 ; \label{it:wass}
\\
&
\He_2(\mu_1\otimes\cdots\otimes\mu_n, \nu_1\otimes\cdots\otimes\nu_n)^2\leqslant \sum_{i=1}^n\He_2(\mu_i, \nu_i)^2, \label{it:hell}	
\end{align}
where $\mu_1\otimes\cdots\otimes\mu_n, \nu_1\otimes\cdots\otimes\nu_n$ are product measures on $(X,d)$.
\end{lemma}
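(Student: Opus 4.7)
The plan is to handle \eqref{it:wass} by constructing an explicit product coupling and then invoking Minkowski's integral inequality, while \eqref{it:hell} is handled via the multiplicative tensorization of the Hellinger affinity combined with an elementary Bernoulli-type inequality.

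For \eqref{it:wass}, my first step is to pick, for each $i$, an optimal coupling $\pi_i \in \mathcal{C}(\mu_i, \nu_i)$ realizing $W_p(\mu_i, \nu_i)^p$ (assuming the right-hand side is finite; otherwise the bound is vacuous). I would then set $\pi := \pi_1 \otimes \cdots \otimes \pi_n$, which is a coupling of $\mu_1\otimes\cdots\otimes\mu_n$ and $\nu_1\otimes\cdots\otimes\nu_n$ on $(X,d)$, so that
\begin{align*}
W_p(\mu_1\otimes\cdots\otimes\mu_n, \nu_1\otimes\cdots\otimes\nu_n)^p
\leqslant \int_{X\times X} \Big(\sum_{i=1}^n d_i(x_i,y_i)^2\Big)^{p/2} \pi(dx\,dy).
\end{align*}
Since $p\geqslant 2$, I have $p/2 \geqslant 1$, and Minkowski's integral inequality applied with exponent $p/2$ to the non-negative functions $f_i(x,y) = d_i(x_i,y_i)^2$ gives
\begin{align*}
\Big(\int \big(\sum_i f_i\big)^{p/2} d\pi\Big)^{2/p} \leqslant \sum_i \Big(\int f_i^{p/2} d\pi\Big)^{2/p} = \sum_i \Big(\int d_i(x_i,y_i)^p\, \pi_i(dx_i\,dy_i)\Big)^{2/p},
\end{align*}
where the last equality uses that the marginal of $\pi$ on $(X_i\times X_i)$ equals $\pi_i$. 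Each right-hand term equals $W_p(\mu_i,\nu_i)^2$ by optimality, which yields \eqref{it:wass}.

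For \eqref{it:hell}, my strategy is to exchange the squared Hellinger distance for the \emph{Hellinger affinity} $\rho(\mu,\nu) := \int \sqrt{(d\mu/dm)(d\nu/dm)}\,dm$. Using $m=(\mu+\nu)/2$ (or any common dominating measure) one has $\He_2(\mu,\nu)^2 = 2 - 2\rho(\mu,\nu)$ for probability measures, and $\rho$ tensorizes as a product: $\rho(\otimes \mu_i, \otimes \nu_i) = \prod_i \rho(\mu_i,\nu_i)$, because the Radon-Nikodym derivative of a product with respect to the product of dominating measures factors. Writing $h_i := \He_2(\mu_i,\nu_i)^2/2 \in [0,1]$, this reduces the claim to
\begin{align*}
2\Big(1 - \prod_{i=1}^n (1-h_i)\Big) \leqslant 2\sum_{i=1}^n h_i,
\end{align*}
which is the standard Weierstrass/Bernoulli inequality $\prod_i (1-h_i) \geqslant 1 - \sum_i h_i$ for $h_i \in [0,1]$, provable by a one-line induction on $n$.

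I do not anticipate a serious obstacle; the only point of care is ensuring that the infima in the definitions of $W_p$ and $\He_2$ are attained or suitably approximated, and that Minkowski's integral inequality is used in its correct form (with exponent $p/2 \geqslant 1$, hence the hypothesis $p\geqslant 2$). The tensorization of the Hellinger affinity is essentially automatic once one fixes a factorized dominating measure; the only mild concern is verifying that $h_i \leqslant 1$, which is immediate from non-negativity of $\rho$.
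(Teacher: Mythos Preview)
Your proposal is correct and follows essentially the same route as the paper: for \eqref{it:wass} the paper also takes independent optimal couplings and applies the triangle inequality in $L^{p/2}$ (which is exactly your Minkowski step with exponent $p/2$), and for \eqref{it:hell} the paper likewise reduces to the multiplicativity of the affinity together with the elementary bound $1-\prod_i x_i \leqslant \sum_i(1-x_i)$ for $x_i\in[0,1]$. The only cosmetic difference is phrasing (random variables versus couplings, and the naming of the elementary inequality).
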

\begin{proof}
For each $i=1,\ldots, n$, let $(U_i, V_i)$ be an optimal coupling such that $U_i\sim \mu_i, V_i\sim \nu_i$ and $\|d_i(U_i, V_i)\|_p=W_p(\mu_i,\nu_i)$, where for a random variable $Z$, $\|Z\|_p=(\mathbb{E}|Z|^p)^{1/p}$ denotes the $L^p$ norm of $Z$. Without loss of generality, we can assume that $(U_i,V_i)_{i=1}^n$ are jointly independent pairs. Define $U=(U_1,\ldots, U_n), V=(V_1,\ldots, V_n)$. Now using the triangle inequality in the $L^{p/2}$ space, we have

\begin{align*}
& \|d(U,V)\|^2_p=\left\|\sum_{i=1}^n d_i(U_i, V_i)^2\right\|_{p/2}
\\
& \leqslant \sum_{i=1}^n \|d_i(U_i, V_i)^2\|_{p/2}=\sum_{i=1}^n W_p(\mu_i, \nu_i)^2.
\end{align*}
Since $W_p(\mu_1\otimes\cdots\otimes\mu_n, \nu_1\otimes\cdots\otimes \nu_n)\leqslant \|d(U, V)\|_{p}$, then \eqref{it:wass} follows. 
	
For \eqref{it:hell}, let $m_1, \ldots, m_n$ be probability measures on $(X,d)$ such that both $\mu_i, \nu_i$ are absolutely continuous with respect to $m_i$ for all $1\leqslant i\leqslant n$. Writing $f_i:=\frac{d\mu_i}{dm_i}, g_i:=\frac{d\nu_i}{dm_i}$, we have

\begin{align}
\He_2(\mu_1\otimes\cdots\otimes\mu_n, \nu_1\otimes\cdots\otimes\nu_n)^2&=\int\left(\sqrt{f_1\cdots f_n}-\sqrt{g_1\cdots g_n}\right)^2dm_1\cdots dm_n \notag 
\\
&=2-2\int\sqrt{f_1\cdots f_n g_1\cdots g_n}dm_1\cdots dm_n  \notag
\\
&=2-2\prod_{i=1}^n \int \sqrt{f_ig_i}dm_i \label{eq:he1}
\\
&\leqslant \sum_{i=1}^n \ 2\left(1-\int\sqrt{f_ig_i}dm_i\right) \notag
\\
&=\sum_{i=1}^n \He_2(\mu_i, \nu_i)^2, \label{eq:he2}
\end{align}
where \eqref{eq:he2} follows from \eqref{eq:he1} using  the elementary fact that
\[
1-x_1\cdots x_n\leqslant \sum_{i=1}^n (1-x_i) \text{ for } 0\leqslant x_1,\ldots, x_n\leqslant 1.
\]
\end{proof}

\begin{proof}[Proof of Theorem~\ref{thm:main}]
To prove stability of \eqref{eq:GB} under tensorization, we use Kuwada's duality theorem \cite[Thereom~2.2]{Kuwada2010a}. In that paper, the duality result was obtained under the assumption of a volume doubling property, which was later relaxed and generalized to Orlicz spaces in \cite{Kuwada2013b}. Moreover, according to the proof of \cite[Theorem~2.2]{Kuwada2010a}, to deduce \eqref{eq:GB} it suffices to show that for any $x, y\in X$ 
\begin{align}\label{eq:GB_P}
W_q(\delta_x P, \delta_y P)\leqslant C d(x,y),
\end{align}
where $\delta_a$ denotes the Dirac measure at $a\in X$ and $p^{-1}+q^{-1}=1$. Since we are considering $1\leqslant p\leqslant 2$, it follows that $q\geqslant 2$. Now for each $1\leqslant i\leqslant n$, and $x_i, y_i\in X_i$, \cite[Theorem~2.2]{Kuwada2010a} implies that $W_q(\delta_{x_i} P_i, \delta_{y_i}P_i)\leqslant C_i d_i(x_i, y_i)$. Using \eqref{it:wass} in Lemma~\ref{lem:wass_hell} we conclude
\begin{align*}
W_q(\delta_x P, \delta_y P)^2&=W_q(\otimes_{i=1}^n \delta_{x_i}P_i, \otimes_{i=1}^n \delta_{y_i} P_i)^2 \\
&\leqslant \sum_{i=1}^n W_q(\delta_{x_i} P_i, \delta_{y_i} P_i)^2 
\\
&\leqslant \sum_{i=1}^n C^2_i d_i(x_i, y_i)^2  \leqslant C^2 d(x,y)^2,
\end{align*}
where $C=\max\{C_i: 1\leqslant i\leqslant n\}$.

Next, to prove stability of \eqref{eq:RP} under tensorization, we use the Hellinger-Kantorovich contraction criterion from \cite[Theorem~3.7]{BaudoinEldredge2021}. Following the argument in their proof, it is enough to show that for any $x, y\in X$
\begin{align}\label{eq:RPI_P}
\He_2(\delta_x P, \delta_y P)^2\leqslant \frac{C}{4} d(x,y)^2.
\end{align}
Since \eqref{eq:RP} holds for each $P_i$ with constant $C_i$, \cite[Theorem~3.7]{BaudoinEldredge2021} implies that for all $x_i, y_i\in X_i$ 
\begin{align*}
\He_2(\delta_{x_i} P_i, \delta_{y_i} P_i)^2 \leqslant \frac{C_i}{4} d_i(x_i, y_i)^2.
\end{align*}
Therefore, using \eqref{it:hell} in Lemma~\ref{lem:wass_hell} and an argument similar to the proof of \eqref{eq:GB_P}, we conclude that \eqref{eq:RPI_P} holds with $C=\max\{C_i: 1\leqslant i\leqslant n\}$, which is equivalent to \eqref{eq:RP} for $P$. 

Finally, to prove \eqref{eq:RLS} for $P$, we again resort to \cite[Theorem~5.15]{BaudoinEldredge2021} which proves equivalence between a reverse logarithmic Sobolev  inequality and a Wang-Harnack inequality. Using the above result, we have that for all $1\leqslant i\leqslant n$ and $p>1$
\begin{gather}\label{eq:WHI_n}
P_i f(x_i)^p \leqslant P_i f^p(y_i)\exp\left(\frac{p}{p-1} \frac{C_id_i(x_i, y_i)^2}{4}\right) 
\\
\text{ for all } x_i, y_i\in X_i,  f\in\Lipb(X_i),  f>0. \notag
\end{gather}
Now, for any positive function $f\in\Lipb(X)$ and $x_i\in X_i$ for $1\le i\le n-1$, we claim that 
\begin{align*}
z\longmapsto &P_1 \otimes \cdots \otimes P_{n-1} f(x_1,\ldots, x_{n-1}, z) 
\\
&:=\int\limits_{X_1\times\cdots\times X_{n-1}} f(y_1,\ldots, y_{n-1},z) P_1(x_1, dy_1)\cdots P_{n-1}(x_{n-1}, dy_{n-1})
\end{align*}
is a positive $\Lipb(X_n)$ function. It is enough to verify our claim for $n=2$, as the the general case will follow by induction. Indeed, for any $f\in\operatorname{Lip}_b(X)$ with $|f(x)-f(y)|\leqslant C_f d(x,y)$ for some constant $C_f>0$ and for all $x,y\in X$, one has
\begin{align*}
&|P_1 f(x_1,x_2)-P_1 f(x_1, y_2)|
\\
& \leqslant \int_{X_1} |f(z, x_2)-f(z, y_2)| P_1(x_1, dz)\leqslant C_f d_2(x_2, y_2),
\end{align*}
where we used the fact that $d((z,x_2), (z,y_2))=d_2(x_2,y_2)$. This proves our claim.
As a result, applying \eqref{eq:WHI_n} we obtain
\begin{align*}
& Pf(x_1,\ldots, x_n)^p =\left(\int_{X_n}P_1\otimes \cdots\otimes P_{n-1} f(x_1,\ldots, x_{n-1}, z)P_n(x_n, dz)\right)^p 
\\
&\leqslant \int_{X_n}\left(P_1\otimes \cdots\otimes P_{n-1} f(x_1,\ldots, x_{n-1}, z)\right)^p P_n(y_n, dz)
\\
& \times \exp\left(\frac{p}{p-1} \frac{C_nd_n^2(x_n, y_n)}{4}\right).
\end{align*}
Iterating the above inequality for each of the variables $x_1,\ldots, x_{n-1}$, we get 
\begin{align*}
& Pf(x)^p\leqslant P f^p(y)\exp\left( \frac{p}{p-1}\sum_{i=1}^n \frac{C_id^2_i(x_i, y_i)}{4}\right)
\\
& \leqslant P f^p(y)\exp\left( \frac{p}{p-1}\frac{C d^2(x,y)}{4}\right)
\end{align*}
with $C=\max\{C_i: 1\leqslant i\leqslant n\}$. Invoking \cite[Theorem 5.15]{BaudoinEldredge2021} we conclude stability of \eqref{eq:RLS} under tensorization. This completes the proof of the theorem. 
\end{proof}

\section{Applications to sub-Riemannian manifolds}\label{sec:2}

Let $M$ be a connected smooth manifold equipped with a sub-Riemannian structure $(\cc H, g)$, where $\cc H$ is a horizontal distribution which satisfies  H\"ormander's condition and is equipped with an inner product $g(\cdot, \cdot)$. For any two points $x, y\in M$, the Carnot-Carath\'eodory distance between $x,y$ is given by 
\begin{align}\label{eq:CC-metric} 
d(x,y):= & \inf\left\{\int_0^1\|\sigma^{\prime}(t)\|_{\cc H} dt: \sigma(0)=x, \sigma(1)=y, 
\right.
\notag
\\
& \left.\sigma^{\prime}(t)\in \cc{H}(\sigma(t)) \text{ for all } 0\leqslant t\leqslant 1\right\},
\end{align}
where for a horizontal vector $u$, $\| u\|_{\cc H}:=\sqrt{g(u,u)}$ and $\sigma: [0,1]\to M$ is absolutely continuous. Throughout this section, we always make the following assumption when considering the Carnot-Carath\'eodory distance. For more on this property we refer to \cite[Section~3.3.1]{AgrachevBarilariBoscainBook2020}.

\begin{assumption}\label{completeness}
$(M, d)$ is a complete metric space.
\end{assumption}
For any smooth function $f$ on $M$, the \emph{horizontal gradient} $\nabla_{\cc H} f$ is defined as the unique element in $\cc H$ such that 
\begin{align*}
	g(\nabla_{\cc H} f, X)= X(f) \text{ for any }  X\in\cc H,
\end{align*}
and the norm of the horizontal gradient is denoted by  
\[
\|\nabla_{\cc H}f\|_{\cc H}^2=g(\nabla_{\cc H} f, \nabla_{\cc H} f).
\]
Suppose  $\mu_{M}$ is a smooth measure on $M$. Now we can define the \emph{divergence} of a smooth vector field $X$ with respect to the measure $\mu_{M}$ as the smooth function $\operatorname{div}_{\mu_{M}} X$ such that

\[
\mathcal{L}_{X}\mu_{M}=\operatorname{div}_{\mu_{M}}\left( X \right) \mu_{M},
\]
where $\mathcal{L}_{X}$ is the Lie derivative in the direction of $X$. For any $f \in C^{\infty}\left( M \right)$ we define 

\[
\Delta_{\cc H}^M:=\operatorname{div}_{\mu_{M}}\left( \nabla_{\cc H} f \right).
\]
This is a sub-Laplacian  whose definition depends on the sub-Riemannian structure on $M$ and the measure $\mu_{M}$. If $X_{1}, ..., X_{m}$ is an orthonormal frame for the horizontal distribution, we can write the horizontal gradient and the sub-Laplacian as 
\begin{align}
& \nabla_{\cc H} f=\sum_{i=1}^{m} X_{i}\left( f \right) X_{i},
\notag
\\
& \Delta_{\cc H}^M f :=\sum_{i=1}^{m} X_{i}^{2}\left( f \right) + \operatorname{div}_{\mu_{M}}\left( X_{i} \right)X_{i}\left( f \right), f \in C^{\infty}\left( M \right).  
\label{e.SubLaplacianLocal}
\end{align}
Note that  we can use the product rule for vector fields to see that $\Delta_{\cc H}^M$ is compatible with the sub-Riemannian structure in the sense that for every $f \in C^\infty(M)$ we have
\begin{equation}\label{e.sRcompatibility}
\frac{1}{2} \Delta_{\cc H}^M (f^2)-f\Delta_{\cc H}^M f=\|\nabla_{\cc H}f\|_{\cc H}^2.
\end{equation}
Observe that this property does not depend on the choice of measure $\mu_M$, as it only uses the second order terms of $\Delta_{\cc H}^M$.

We denote by $L^{2}\left( M, \mu_M \right)$ the space of real-valued functions on $M$ which are
square-integrable with respect to the measure $\mu_{M}$. Then we see that  $\Delta_{\cc H}^M$ is symmetric with respect to $\mu_M$ on $L^{2}\left( M, \mu_M \right)$, that is,  for every $f, g \in \cC^\infty_c(M)$
\[
\int_M f  \Delta_{\cc H}^M  g d\mu_M=\int_M g  \Delta_{\cc H}^M  f d\mu_M.
\]
Moreover, by \cite{Strichartz1986a} we see that completeness of $\left( M, d \right)$ (Assumption~\ref{completeness}) implies that $\Delta_{\cc H}^M$ is essentially self-adjoint on $L^{2}\left(M, \mu_M \right)$, and by \cite{JerisonSanchez-Calle1986, FeffermanPhong1983} it is locally sub-elliptic in addition to being hypoelliptic by \cite{Hormander1967a}. 
 
We refer to \cite{GordinaLaetsch2016, GordinaLaetsch2017} for a discussion of the role of a measure on sub-Riemannian manifolds,  and to \cite[p. 950]{DriverGrossSaloff-Coste2009a} about essential self-adjointness of such operators in the setting of Lie groups, where $\mu_M$ is chosen to be a Haar measure. For a survey on such operators and smooth symmetrizing measure we refer to \cite[Section 1.3]{Baudoin2022}. 

The fact that $\Delta_{\cc H}^M$ is essentially self-adjoint implies that the unique self-adjoint (Friedrichs) extension of $\Delta_{\cc H}^M$ (which we still denote by $\Delta_{\cc H}^M$) is the generator of the Dirichlet form $\mathcal{E}_M$ which is the closure of the quadratic form $\int_M \| \nabla f \|_{\cc H}^2 d\mu_M$, $f \in \cC^\infty_c(M)$. We call the semigroup  corresponding to $\Delta_{\cc H}^M$ the \emph{horizontal heat semigroup} $P^M_t$ as in \cite[Section 1.5]{Baudoin2022}.

\subsection{Sub-Riemannian manifolds obtained by tensorization and projection}\label{sec:3.1}
We introduce the concept of sub-Riemannian submersion, which is a generalization of Riemannian submersion for Riemannian manifolds. 

\begin{definition}\label{def:submersion}
A mapping $\pi:(M, \cc{H}_M, g_M)\longrightarrow (N, \cc{H}_N, g_N)$ between two sub-Riemannian manifolds is called a \emph{sub-Riemannian submersion} if 
\begin{enumerate}[label=(\roman*)]
\item $\pi$ is a submersion between differentiable manifolds $M$ and $N$;
\item $\pi_{\ast}\left( \cc{H}_{M} \right)=\cc{H}_N$;
\item \label{item3} $\pi^{\ast}g_M=g_N$, that is, for all $x\in M$, $d\pi_x: \cc{H}_M(x)\to \cc{H}_N(\pi(x))$ is an isometry. 
\end{enumerate}
\end{definition}
\begin{remark}\label{remark3.2}
Note that the third condition in this definition can be weakened similarly to the proof of \cite[Proposition 3.6]{GordinaLuo2024}as follows. If $\wt{X}_1, \ldots, \wt{X}_n$ is an orthonormal frame of $\cc{H}_M$, then 
\begin{align*}
& \left\{d\pi_x(\wt{X}_i)(\pi(x)): i=1,\ldots, n \text{ such that } \right.
\\
& \left. d\pi_x(\wt X_i)(\pi(x)) \text{ are linearly independent in } \cc{H}_N\right\}
	\end{align*}
forms an orthonormal frame in $\cc{H}_N$.
\end{remark}

As we usually consider sub-Riemannian manifolds equipped with a symmetrizing measure for the sub-Laplacian, we assume that the sub-Riemannian submersion satisfies the following additional assumption. Note that the sub-Laplacians $\Delta^M_{\cc H}$, $\Delta_{\cc H}^N$ depend on the symmetrizing measures, as one can see easily from the expression for sub-Laplacians in local coordinates \eqref{e.SubLaplacianLocal}. 

\begin{assumption}\label{SubmersionMeasure}
Suppose $\pi: M \longrightarrow N$ is a sub-Riemannian submersion between sub-Riemannian manifolds 
$\pi:(M, \cc{H}_M, g_M)\longrightarrow (N, \cc{H}_N, g_N)$, $\Delta_{\cc H}^M$ and $\Delta_{\cc H}^N$ are sub-Laplacians compatible with the corresponding sub-Riemannian structures on $M$ and $N$ respectively. We assume $\Delta^M_{\cc H}(f\circ\pi)=(\Delta^N_{\cc H} f)\circ\pi$ for every $f \in \cC^\infty_c(N)$.
\end{assumption}

\begin{example}[Riemannian submersions]
If $(M,g_M)$ and $(N,g_N)$ are Riemannian manifolds, then a Riemannian submersion $\pi:(M,g_M) \to (N,g_N)$ induces a sub-Riemannian submersion $\pi:(M, \cc{H}_M, g_M)\longrightarrow (N, \cc{H}_N, g_N)$,  where $\cc{H}_M$ is the horizontal space of the submersion which we assume to satisfy H\"ormander's condition and $\cc{H}_N$ is the whole tangent bundle of $N$. The horizontal Laplacian $\Delta^M_\mathcal{H}$ on $M$, as defined on \cite[p.~70]{BaudoinDemniWangBook2024}, is then a sub-Laplacian on $M$ with symmetrizing measure $\mu_M$, where $\mu_M$ is the Riemannian volume measure of $M$. In this case $\pi$ intertwines the horizontal Laplacian  $\Delta^M_\mathcal{H}$ with the Laplace-Beltrami operator $\Delta^N_\mathcal{H}$  of $N$ as in  Assumption \ref{SubmersionMeasure} if and only if $\pi$ is harmonic, see \cite[Theorem~1]{GoldbergIshihara1978} and the proof of \cite[Theorem 4.1.10.]{BaudoinDemniWangBook2024}. It follows for instance  that for Riemannian submersions with totally geodesic fibers Assumption~\ref{SubmersionMeasure} is satisfied when $\Delta^M_{\mathcal H}$ is the horizontal Laplacian and $\Delta^N_\mathcal{H}$ is the Laplace-Beltrami operator.  
\end{example}

\begin{example}\label{example3.3}[Measure preserving sub-Riemannian submersions]
Suppose $\pi: M \longrightarrow N$ is a sub-Riemannian submersion between sub-Riemannian manifolds 
$(M, \cc{H}_M, g_M)$ and $(N, \cc{H}_N, g_N)$, $\Delta_{\cc H}^M$ and $\Delta_{\cc H}^N$ are sub-Laplacians compatible with corresponding sub-Riemannian structures on $M$ and $N$ respectively. If $\mu_M$ is a symmetrizing measure for $\Delta_{\cc H}^M$, and $\mu_N$ is a symmetrizing measure for $\Delta_{\cc H}^N$, and we assume that the measure $\mu_N$ is the pushforward of $\mu_M$ under $\pi$, i.e. $\mu_N=\pi_{\sharp} \mu_M$, then Assumption~\ref{SubmersionMeasure} holds. Indeed, since $\pi$ satisfies \ref{item3} in Definition~\ref{def:submersion}, an argument similar to \cite[Equation~(4.3)]{GordinaLuo2022} and \cite[Equation~(3.4)]{GordinaLuo2024} leads to 
\begin{align*}
\|\nabla^M_{\cc H}(f\circ \pi)\|^2_{\cc H_M}=\|\nabla^N_{\cc H} f\|_{\cc H_N}^2\circ\pi,
\end{align*}
where $\nabla^M_{\cc H}$ (resp. $\nabla^N_\cc{H}$) denotes the horizontal gradient on $(M,\cc H_M)$ (resp. $N, \cc{H}_N$).
Moreover, since $\mu_N=\pi_{\sharp} \mu_M$ we deduce that  for any $f\in \cC_c^\infty(N)$
\[
\int_M \|\nabla^M_{\cc H}(f\circ \pi)\|^2_{\cc H_M} d\mu_M=\int_N \|\nabla^N_{\cc H} f \|^2_{\cc H_N} d\mu_N.
\]
As a consequence, one has that for any $f\in \cC^\infty_c(N)$, $\Delta^M_{\cc H}(f\circ\pi)=(\Delta^N_{\cc H} f)\circ\pi$.
\end{example}

\begin{example}[Sub-Riemannian isometries]
Suppose $\pi: M \longrightarrow N$ is a sub-Riemannian isometry between sub-Riemannian manifolds 
$(M, \cc{H}_M, g_M)$ and $(N, \cc{H}_N, g_N)$. This means that $\pi$ is a diffeomorphism such that $d\pi$ is an isometry between $(\cc{H}_M, g_M)$ and $(\cc{H}_N, g_N)$. Assume that $\Delta_{\cc H}^M$ and $\Delta_{\cc H}^N$ are  sub-Laplacians  on $M$ and $N$ respectively which admit Popp's measures on $M$ and $N$ respectively as symmetrizing measures, see \cite[Corollary 2]{BarilariRizzi2013}. In that setting, it follows that from \cite[Proposition~7]{BarilariRizzi2013} that sub-Riemannian  isometries are volume preserving transformations for Popp's volume, and therefore Assumption \ref{SubmersionMeasure} holds.
\end{example}

\begin{example}[Sub-Riemannian submersions between Lie groups]
Suppose $\pi: M \longrightarrow N$ is a sub-Riemannian submersion between sub-Riemannian manifolds 
$(M, \cc{H}_M, g_M)$ and $(N, \cc{H}_N, g_N)$, where $M$ and $N$ are Lie groups equipped with left-invariant sub-Riemannian structures. Consider the sub--Laplacians $\Delta_{\cc H}^M$ and $\Delta_{\cc H}^N$ whose symmetrizing measures are Haar measures on $M$ and $N$ respectively. Then Assumption \ref{SubmersionMeasure} holds if $M$ and $N$ are unimodular. Indeed, by \cite[Proposition~17]{Agrachev} and \cite[Theorem~4.3]{GordinaLaetsch2016} for unimodular groups these sub-Laplacians can be written as  
\[
\Delta_{\cc H}^M=\sum_{i=1}^d X_i^2,
\]
where $\left\{ X_i \right\}_{i=1}^{d}$ form a left-invariant orthonormal frame of $\cc{H}_M$. It then follows from the chain rule that 
\begin{align*}
\Delta^M_{\cc H}(f\circ\pi)& =\left(\sum_{i=1}^d X_i^2\right) (f\circ\pi) \\
 & =\left[\left(\sum_{i=1}^d d\pi(X_i)^2\right)f \right] \circ \pi 
\end{align*}
Since the sub-Riemannian submersion property implies that $\left\{d\pi(X_i) \right\}_{i=1}^{d}$ form a left-invariant orthonormal frame of $\cc{H}_M$, we have that $\sum_{i=1}^d d\pi(X_i)^2 =\Delta_{\cc H}^N$ which proves that Assumption~\ref{SubmersionMeasure} holds.
\end{example}

\begin{example}[Homogeneous spaces]\label{example:homogeneous_space}
Let $G$ be a connected Lie group of dimension $(n+m)$ and $H$ be a closed subgroup of $G$. Then by \cite[Theorem~20.12]{LeeBook2003SmoothManifold}, $H$ is an embedded sub-manifold of $G$. If $H$ is a $k$-dimensional sub-manifold of $G$, the right cosets $H\backslash G$ of $H$ have an induced smooth structure and form an $(n+m-k)$ dimensional smooth manifold. We call $H\backslash G$ a \emph{homogeneous space} and denote it by $M$. There exists a natural submersion $\pi:G\longrightarrow M$ defined by $\pi(g)=Hg$. If $(G,\cc{H}_G, \langle\cdot, \cdot\rangle_{\cc{H}_G})$ is a sub-Riemannian structure on $G$, then by \cite[Theorem~3.2]{GordinaLuo2024}, the distribution 
				\begin{align*}
					\cc{H}_M(\pi(g))=\Span\{d\pi_g(\wt{X})(\pi(g)): \wt X\in\cc{H}_G \mbox{ such that $d\pi_g(\wt{X})(\pi(g))\neq 0$}\}
				\end{align*}	
satisfies H\"ormander's condition. Moreover, according to the proof of \cite[Theorem~3.2]{GordinaLuo2024}, if $\{\wt{X}_1,\ldots, \wt{X}_n\}$ is an orthonormal frame of $\cc{H}_G$, then we can define a metric $\langle\cdot, \cdot\rangle_{\cc{H}_M}$ on $\cc{H}_M$ with respect to which $\{d\pi_g(\wt{X}_i)(\pi(g)): i=1,\ldots, n \mbox{ such that $d\pi_g(\wt{X}_i)(\pi(g))$ are linearly independent}\}$ forms an orthonormal frame for $\cc{H}_M$. Hence, $(M,\cc{H}_M, \langle\cdot, \cdot\rangle_M)$ defines a sub-Riemannian structure on $M$, and according to Remark~\ref{remark3.2}, $\pi: (G,\cc{H}_G, \langle\cdot, \cdot\rangle_G)\longrightarrow (M,\cc{H}_M, \langle\cdot, \cdot\rangle_M)$ is a sub-Riemannian submersion. For any $f\in C^\infty(M)$, the horizontal gradient on $M$ is defined as 
\begin{align*}
\nabla^M_{\cc H} f(\pi(g))=d\pi_g(\nabla^G_{\cc H}(f\circ\pi))(\pi(g)) \text{ for all } g\in G.
\end{align*}
One can define the sub-Laplacian on $M$ by 
\begin{align*}
\Delta^M_\cc{H}=\sum_{i=1}^n (d\pi_g(\wt{X}_i))^2.
\end{align*}
Clearly, for all $f\in C^\infty_c(M)$, $\Delta^G_{\cc H}(f\circ \pi)=\Delta^M_{\cc{H}} f\circ\pi$, that is, Assumption~\ref{SubmersionMeasure} is satisfied. In particular, when $G$ is unimodular, by \cite[Proposition 10, Chapter VII §2]{BourbakiIntegrationII}, $H$ is unimodular as well, and by \cite[Theorem~1]{Lang_book_1974} there is a unique $G$-invariant measure on $M$, denoted by $\mu_M$ such that for any $f\in C^\infty_c(G)$,
\begin{align*}
\int_G f(g) \mu_G(dg)=\int_M f^H(m)\mu_M(dm),
\end{align*}
where $f^H(m)=\int_H f(h\circ q(m))\mu_H(dh)$ for all $m\in M$, $q:M\longrightarrow G$ is a cross section map, and $\mu_G, \mu_H$ are the Haar measures on $G,H$ respectively. The mapping $f\mapsto f^H$ is also surjective from $C^\infty_c(G)$ to $C^\infty_c(M)$. By \cite[Remark~6.18]{DriverGrossSaloff-Coste2010}, the induced sub-Laplacian $\Delta^M_{\cc H}$ is symmetric with respect to $\mu_M$ and its essential self-adjoint extension on $L^2(M,\mu_M)$ generates a self-adjoint Markov semigroup.
\end{example}

We also consider the product of sub-Riemannian manifolds $(M_i,\cc{H}_i, g_i)_{i=1}^n$, where the horizontal distribution on the product space $M=M_1\times\cdots\times M_n$ is given by $\cc{H}=\cc{H}_1\oplus\cdots\oplus\cc{H}_n$ equipped with the sub-Riemannian metric $g=g_1\oplus\cdots\oplus g_n$. It can be easily verified that $(M,\cc{H}, g)$ is a sub-Riemannian manifold. Moreover, the horizontal sub-Laplacian on $(M,\cc H, g)$ is defined by
 
\begin{align}\label{eq:subLaplacian_product}
\Delta^M_{\cc H}=\Delta^{M_1}_{\cc H_1}\oplus\cdots\oplus\Delta^{M_n}_{\cc H_n}.
\end{align} 

The next two theorems show stability of functional inequalities under tensorization and sub-Riemannian submersions of manifolds. In addition to the functional inequalities \eqref{eq:GB}, \eqref{eq:RP} and \eqref{eq:RLS}, we also consider the following \emph{Li-Yau type inequality} on sub-Riemannian manifolds. For all $t>0$ and positive $f\in\cC^\infty_c(M)$

\begin{align}\label{eq:L-Y}\tag{LYI}
C_1(t)\|\nabla_{\cc H} \log P^M_t f\|^2_{\cc H} &\leqslant \frac{\Delta_{\cc H}P^M_t f}{P^M_t f}+C_2(t), 
\end{align}
where $C_1(t), C_2(t)$ are positive, possibly time-dependent constants.

\begin{theorem}\label{thm:tensorization}
	Let $(M_i, \cc{H}_i,g_i, \Delta_{\cc H_i}^{M_i}, \mu_{M_i})$, $i=1, ..., n$ be sub-Riemannian manifolds such that \eqref{eq:GB} (resp. \eqref{eq:RP}, \eqref{eq:RLS}) holds for each horizontal heat semigroup $(P^{(i)}_t)_{t\geqslant 0}$ on $M_i$ with a constant $C_i(t)$. Then the horizontal heat semigroup $(P^M_t)_{t\geqslant 0}$ on $M=M_1\times\cdots\times M_n$ generated by $\Delta^M_{\cc H}$ satisfies \eqref{eq:GB} (resp. \eqref{eq:RP}, \eqref{eq:RLS}) with the constant $C(t)=\max\{C_i(t): 1\leqslant i\leqslant n\}$.
	
	Moreover, if \eqref{eq:L-Y} holds for each $(P^{(i)}_t)_{t\geqslant 0}$ with constants $C^{(i)}_1(t)$ and $C^{(i)}_2(t)$, then $(P^M_t)_{t\geqslant 0}$ satisfies \eqref{eq:L-Y} as well with $C_1(t)=\min\{C^{(i)}_1(t): 1\leqslant i\leqslant n\}$ and $C_2(t)=\sum_{i=1}^n C^{(i)}_2(t)$.
\end{theorem}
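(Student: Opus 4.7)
The plan is to deduce the three semigroup inequalities for $P^M_t$ from Theorem~\ref{thm:main} after identifying $P^M_t$ as a tensor product of the $P^{(i)}_t$, and to treat the Li--Yau inequality separately by a short pointwise argument that relies only on the decomposition of $\Delta^M_{\cc H}$ across the factors.

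The preparatory structural step is twofold. First, because the individual sub-Laplacians $\Delta^{M_i}_{\cc H_i}$ act on disjoint sets of variables and therefore commute, the semigroup generated by $\Delta^M_{\cc H}$ in \eqref{eq:subLaplacian_product} factorizes as $P^M_t = P^{(1)}_t \otimes \cdots \otimes P^{(n)}_t$ in the sense used in Section~\ref{sec:1}. Second, the Carnot--Carath\'eodory distance $d$ on $M$ associated with $\cc H_1 \oplus \cdots \oplus \cc H_n$ satisfies $d^2 = \sum_{i=1}^n d_i^2$ and hence coincides with the product length metric \eqref{eq:square_metric}. The upper bound $d^2 \leqslant \sum_i d_i^2$ comes from running minimizing horizontal curves in each $M_i$ simultaneously (after a suitable reparameterization), while the reverse inequality uses the Cauchy--Schwarz inequality applied to the pointwise decomposition $\|\sigma'\|_{\cc H}^2 = \sum_i \|\sigma_i'\|_{\cc H_i}^2$ of a horizontal curve $\sigma$ in $M$ in terms of its projections $\sigma_i$.

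With these two facts in place, the first part of the theorem is an immediate application of Theorem~\ref{thm:main} to the Markov kernels $P^{(i)}_t$ on the length spaces $(M_i, d_i)$: the product metric matches \eqref{eq:square_metric}, the product semigroup is the correct tensor product, and the sub-Riemannian gradient agrees with the metric upper gradient on smooth functions, so the inequalities \eqref{eq:GB}, \eqref{eq:RP}, and \eqref{eq:RLS} transfer with constant $C(t) = \max_i C_i(t)$.

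For the Li--Yau statement I would argue pointwise. Fix $t > 0$ and a positive $f \in \cC^\infty_c(M)$, set $F := P^M_t f$, and for each $i$ freeze the coordinates $\{x_j : j \neq i\}$. The factorization of $P^M_t$ together with the commutation of the individual semigroups expresses $F$, viewed as a function of its $i$-th argument alone, as $P^{(i)}_t h_i$ for a positive function $h_i$ of one variable depending on the frozen coordinates. Applying the hypothesized Li--Yau inequality on $M_i$ to $h_i$ at the point $x_i$, summing over $i$, and invoking $\Delta^M_{\cc H} = \sum_i \Delta^{M_i}_{\cc H_i}$ together with $\|\nabla_{\cc H} \cdot\|_{\cc H}^2 = \sum_i \|\nabla_{\cc H_i}\cdot\|_{\cc H_i}^2$ yields \eqref{eq:L-Y} for $P^M_t$ with $C_1(t) = \min_i C^{(i)}_1(t)$ and $C_2(t) = \sum_i C^{(i)}_2(t)$. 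The one genuinely non-routine point in the whole argument is the identification $d^2 = \sum_i d_i^2$ for the Carnot--Carath\'eodory distance on the product, which is folklore but needs care because $\cc H$ is a strict subbundle of $TM$; everything else is either a direct appeal to Theorem~\ref{thm:main} or an elementary pointwise manipulation.
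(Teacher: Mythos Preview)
Your proposal is correct and follows essentially the same approach as the paper: the identification $d^2=\sum_i d_i^2$ for the Carnot--Carath\'eodory distance is exactly Lemma~\ref{lem:metrics}\ref{it:tensor_metric}, after which \eqref{eq:GB}, \eqref{eq:RP}, \eqref{eq:RLS} follow directly from Theorem~\ref{thm:main}, and the Li--Yau part is handled by applying the factor-wise inequality to $P^M_t f$ in each variable and summing, using $\Delta^M_{\cc H}=\sum_i \Delta^{M_i}_{\cc H_i}$ and the orthogonal splitting of the horizontal gradient. Your freezing-of-coordinates description of the Li--Yau step is slightly more explicit than the paper's, but the content is identical.
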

\begin{theorem}\label{thm:projection} 
Let $(M, \cc{H}_M, g_M,\Delta_{\cc H}^M, \mu_M)$, $(N, \cc{H}_N, g_N,\Delta_{\cc H}^N, \mu_N)$ be two sub-Riemannian manifolds such that \eqref{eq:GB} (resp. \eqref{eq:RP}, \eqref{eq:RLS},\eqref{eq:L-Y}) holds for $M$ with constant $C(t)$. Assume that there exists a sub-Riemannian submersion from $M$ to $N$ satisfying Assumption~\ref{SubmersionMeasure}. 
Then \eqref{eq:GB} (resp. \eqref{eq:RP}, \eqref{eq:RLS},\eqref{eq:L-Y}) also holds for $N$ with the same constants.
\end{theorem}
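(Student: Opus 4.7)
The plan is to reduce Theorem~\ref{thm:projection} to the corresponding inequality on $M$ applied to the pullback $f\circ\pi$, so that each inequality on $N$ drops out after a one-line pointwise identification. The engine is the standard intertwining package for horizontal Laplacians under a sub-Riemannian submersion:
\[
\text{(a)}\quad \|\nabla^M_{\cc H}(f\circ\pi)\|_{\cc H_M}(x) = \|\nabla^N_{\cc H} f\|_{\cc H_N}(\pi(x)), \qquad \text{(b)}\quad P^M_t(f\circ\pi) = (P^N_t f)\circ\pi.
\]

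Step one is (a), which is immediate from Definition~\ref{def:submersion}: since conditions (ii)--(iii) force $d\pi_x$ to restrict to a linear isometry $\cc H_M(x)\to \cc H_N(\pi(x))$, the chain rule $d(f\circ\pi) = df\circ d\pi$ identifies the two horizontal gradient norms. Integrating the square of (a) against $\mu_M$ and using Assumption~\ref{SubmersionMeasure} yields the Dirichlet-form equality $\cc E_M(f\circ\pi, f\circ\pi) = \cc E_N(f, f)$ for $f\in \cC^\infty_c(N)$. Polarizing and using symmetry of $\Delta^M_{\cc H}, \Delta^N_{\cc H}$ with respect to their symmetrizing measures extracts the generator intertwining $\Delta^M_{\cc H}(f\circ\pi) = (\Delta^N_{\cc H} f)\circ\pi$, and essential self-adjointness on $\cC^\infty_c$ from Assumption~\ref{completeness} lifts this to (b) via the spectral theorem applied to the composition isometry $f\mapsto f\circ\pi$ from $L^2(\mu_N)$ into $L^2(\mu_M)$.

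With (a) and (b) in hand, the transfer of the four functional inequalities is essentially a substitution. For \eqref{eq:GB}, the bound on $M$ applied to $f\circ\pi$ at a point $x$ reads
\[
\|\nabla^N_{\cc H} P^N_t f\|_{\cc H_N}(\pi(x)) = \|\nabla^M_{\cc H} P^M_t(f\circ\pi)\|_{\cc H_M}(x) \leqslant C(t)\bigl(P^M_t \|\nabla^M_{\cc H}(f\circ\pi)\|^p_{\cc H_M}\bigr)^{1/p}(x),
\]
and a second use of (a)--(b) rewrites the right-hand side as $C(t)\bigl(P^N_t\|\nabla^N_{\cc H} f\|^p_{\cc H_N}\bigr)^{1/p}(\pi(x))$; surjectivity of $\pi$ then promotes this pointwise statement to the required bound everywhere on $N$. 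The same template, with $f^2$ and $f\log f$ in place of $f$ and with the ratio $\Delta_{\cc H} P_t f / P_t f$ for \eqref{eq:L-Y}, handles \eqref{eq:RP}, \eqref{eq:RLS} and \eqref{eq:L-Y}, because every constituent pulls back cleanly under $\pi$ by (a)--(b).

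The main obstacle is step one: passing rigorously from the Dirichlet-form equality to the pointwise generator intertwining and then to the semigroup level. In the Riemannian case this is equivalent to harmonicity of $\pi$; here Assumption~\ref{SubmersionMeasure} is precisely what encodes the sub-Riemannian analogue, and Assumption~\ref{completeness} supplies the functional-analytic framework to make the lift to semigroups rigorous. All subsequent work is algebraic substitution.
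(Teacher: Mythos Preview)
Your proposal is correct and follows essentially the same route as the paper: establish the gradient-norm identity \((a)\) from the isometry condition in Definition~\ref{def:submersion}, pass through the Dirichlet-form equality (using Assumption~\ref{SubmersionMeasure}) to the generator and then semigroup intertwining \((b)\), and finally transfer each of \eqref{eq:GB}, \eqref{eq:RP}, \eqref{eq:RLS}, \eqref{eq:L-Y} by substituting $f\circ\pi$ and invoking surjectivity of $\pi$. If anything, your treatment of the functional-analytic lift from generator to semigroup (via essential self-adjointness and the composition isometry $f\mapsto f\circ\pi$) is slightly more explicit than the paper's, which simply asserts the intertwining once $\Delta^M_{\cc H}(f\circ\pi)=(\Delta^N_{\cc H} f)\circ\pi$ is in hand.
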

In the following section we prove some simple results on the Carnot-Carath\'eodory metric which will be useful in the proof of the above theorems. If we think of such sub-Riemannian manifolds as a Dirichlet space, we can treat such tensor products as  tensor products of Dirichlet forms, see \cite[Section~2.1, Proposition~2.1.2]{BouleauHirschDirichletFormsBook}.

\subsection{Carnot-Carath\'eodory metric on manifolds obtained by tensorization and submersions}
The following lemma states that the Carnot-Carath\'eodory metric is compatible with both tensorization and sub-Riemannian submersions. 
\begin{lemma}\label{lem:metrics}
\begin{enumerate}[leftmargin=*]
\item \label{it:tensor_metric} 
Let $(M_i, \cc{H}_i, g_i)_{i=1}^n$ be a collection of sub-Riemannian manifolds with horizontal distributions $(\cc{H}_i)_{i=1}^n$. If $d_i$ denotes the  Carnot-Carath\'eodory metric on $M_i$, then the Carnot-Carath\'eodory metric on $M_1\times \cdots\times M_n$ with horizontal distribution $(\cc{H}_1\oplus \cdots\oplus \cc{H}_n, g_1\oplus\cdots\oplus g_n)$ is given by
\begin{align}
d(x,y)=\left(\sum_{i=1}^n d_i(x_i, y_i)^2\right)^{\frac{1}{2}}.
\end{align}
\item \label{it:proj_metric} Let $(M, \cc{H}_M, g_M),(N,\cc{H}_N, g_N)$ be two sub-Riemannian manifolds with  Carnot-Carath\'eodory metrics $d_M, d_N$ respectively. Assume that $\pi: M\to N$ is a sub-Riemannian submersion. Then, for any $x,y\in N$, there exists $a\in \pi^{-1}(x), b\in \pi^{-1}(y)$ such that
		\begin{align*}
			d_M(a,b)= d_N(x, y).
		\end{align*}
	\end{enumerate}
\end{lemma}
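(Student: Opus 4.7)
My plan splits along the two parts of the lemma. Part (i) is a purely metric argument about product length spaces; part (ii) uses horizontal lifting and is the sub-Riemannian analogue of the classical fact that Riemannian submersions do not increase distances between fibers.

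For part (i), I would prove the two inequalities separately. The upper bound $d(x,y)\leqslant\sqrt{\sum_i d_i(x_i,y_i)^{2}}$ follows by choosing, for each $i$, a horizontal curve $\gamma_i\colon[0,1]\to M_i$ from $x_i$ to $y_i$ of constant speed $c_i$ arbitrarily close to $d_i(x_i,y_i)$, and forming $\sigma:=(\gamma_1,\ldots,\gamma_n)$. Since $\cc{H}=\cc{H}_1\oplus\cdots\oplus\cc{H}_n$ and $g=g_1\oplus\cdots\oplus g_n$, the curve $\sigma$ is horizontal in $M$ with pointwise speed $\sqrt{\sum_i c_i^{2}}$; its length is therefore $\sqrt{\sum_i c_i^{2}}$, and letting $c_i\downarrow d_i(x_i,y_i)$ produces the bound. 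For the matching lower bound, take any horizontal $\sigma=(\sigma_1,\ldots,\sigma_n)\colon[0,1]\to M$ from $x$ to $y$. Each $\sigma_i$ is horizontal in $M_i$ by the direct-sum structure, and the triangle inequality for $\R^n$-valued integrals applied to $t\mapsto(\|\sigma_1'(t)\|_{\cc H_1},\ldots,\|\sigma_n'(t)\|_{\cc H_n})$ yields
\begin{align*}
\int_0^1\!\sqrt{\sum_i \|\sigma_i'(t)\|_{\cc H_i}^{2}}\,dt \;\geqslant\; \sqrt{\sum_i\Big(\int_0^1 \|\sigma_i'(t)\|_{\cc H_i}\,dt\Big)^{\!2}} \;\geqslant\; \sqrt{\sum_i d_i(x_i,y_i)^{2}}.
\end{align*}
Taking the infimum over $\sigma$ closes part (i).

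For part (ii), fix any $a\in\pi^{-1}(x)$. For each $\epsilon>0$, select a horizontal curve $\gamma_\epsilon\colon[0,1]\to N$ from $x$ to $y$ with length at most $d_N(x,y)+\epsilon$. Condition (iii) in Definition~\ref{def:submersion} produces, at every point of the fiber $\pi^{-1}(\gamma_\epsilon(t))$, a unique horizontal lift of $\gamma_\epsilon'(t)$ into $\cc{H}_M$; solving the resulting time-dependent horizontal ODE from the initial value $a$ and extending the solution globally using Assumption~\ref{completeness} produces a horizontal lift $\tilde\gamma_\epsilon\colon[0,1]\to M$ with $\|\tilde\gamma_\epsilon'(t)\|_{\cc H_M}=\|\gamma_\epsilon'(t)\|_{\cc H_N}$ for every $t$. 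Setting $b_\epsilon:=\tilde\gamma_\epsilon(1)\in\pi^{-1}(y)$ therefore gives $d_M(a,b_\epsilon)\leqslant d_N(x,y)+\epsilon$. Since the closed ball of radius $d_N(x,y)+1$ around $a$ in the complete locally compact space $M$ is compact, I would extract a cluster point $b\in\pi^{-1}(y)$ of $\{b_\epsilon\}$ satisfying $d_M(a,b)\leqslant d_N(x,y)$.

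The reverse inequality is immediate: for any horizontal $\eta\colon[0,1]\to M$ from $a$ to $b$, the projection $\pi\circ\eta$ is a horizontal curve from $x$ to $y$ in $N$, and condition (iii) gives $\|(\pi\circ\eta)'(t)\|_{\cc H_N}=\|\eta'(t)\|_{\cc H_M}$, so its length equals that of $\eta$ and $d_N(x,y)\leqslant d_M(a,b)$. Combining the two directions yields the claimed equality. The main technical obstacle I expect is the global existence of the horizontal lift $\tilde\gamma_\epsilon$; this follows from standard ODE extension arguments combined with the completeness of $(M,d_M)$, much as in the sub-Riemannian Hopf--Rinow theorem. The compactness step used to produce $b$ from $\{b_\epsilon\}$ relies on the local compactness of $M$, which is part of the standing setup.
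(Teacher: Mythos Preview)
Your proof is correct, and part~(i) is essentially identical to the paper's: the lower bound via the triangle inequality for $\R^n$-valued integrals is exactly the paper's Cauchy--Schwarz trick with the unit vector $(l_1,\ldots,l_n)/\sqrt{\sum_i l_i^2}$, and the upper bound differs only in that you approximate by nearly-constant-speed curves while the paper invokes Assumption~\ref{completeness} to take actual minimizing geodesics.

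For part~(ii) the paper is more direct. Since $(N,d_N)$ is itself complete (Assumption~\ref{completeness} is a standing hypothesis on both manifolds), the paper simply takes a \emph{minimizing} horizontal geodesic $\gamma$ from $x$ to $y$ in $N$, lifts it once to $\widehat\gamma$ in $M$, and sets $a=\widehat\gamma(0)$, $b=\widehat\gamma(1)$; the chain $d_N(x,y)\leqslant d_M(a,b)\leqslant \operatorname{length}(\widehat\gamma)=\operatorname{length}(\gamma)=d_N(x,y)$ finishes immediately. Your route---fixing $a$ in advance, lifting $\epsilon$-approximate curves, and extracting a limit of the endpoints $b_\epsilon$ via Hopf--Rinow compactness---is valid and in fact proves the slightly stronger statement that $a$ may be prescribed arbitrarily in the fiber, but the approximation and compactness steps are unnecessary once you use the existence of geodesics in $N$. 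Both approaches rely on the same core ingredient you identify as the main technical point: global existence of horizontal lifts of curves, which the paper takes for granted.
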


\begin{proof}
We prove item \eqref{it:tensor_metric} for $n=2$ and the general case will follow by induction. Consider any horizontal curve $\sigma: [0,1]\to M_1\times M_2$ such that $\sigma(0)=x, \sigma(1)=y$. Writing $\sigma=(\sigma_1, \sigma_2)$, it is evident that $\sigma^{\prime}_i(t)\in \cc{H}_i(\sigma_i(t))$ and $\sigma_i(0)=x_i, \sigma_i(1)=y_i$ for $i=1,2$. Therefore, by orthogonality of the horizontal distributions, we have for all $t\in [0,1]$
\begin{align*}
\|\sigma^{\prime}(t)\|^2_{\cc H}= \|\sigma^{\prime}_1(t)\|^2_{\cc{H}_1}+\|\sigma^{\prime}_2(t)\|^2_{\cc H_2}.
\end{align*}
Let us write $l_1=\int_0^1\|\sigma^{\prime}_1(t)\|_{\cc H_1}dt, l_2=\int_0^1\|\sigma^{\prime}_2(t)\|_{\cc H_2}dt$. Then we have
\begin{align*}
&\int_0^1\|\sigma^{\prime}(t)\|_{\cc H} dt
\\
&= \int_0^1\sqrt{\|\sigma^{\prime}_1(t)\|^2_{\cc H_1}+\|\sigma^{\prime}_2(t)\|^2_{\cc H_2}} dt 
\\
&\geqslant \frac{l_1}{\sqrt{l^2_1+l^2_2}}\int_0^1 \|\sigma^{\prime}_1(t)\|_{\cc H_1} dt+\frac{l_2}{\sqrt{l^2_1+l^2_2}}\int_0^1 \|\sigma^{\prime}_2(t)\|_{\cc H_2} dt 
\\
&=\sqrt{l^2_1+l^2_2},
\end{align*}
where the second inequality follows from convexity of the square function.
This shows that $d(x, y) \geqslant \sqrt{d^2_1(x_1, y_1)+d^2_2(x_2, y_2)}$. For the inequality in the opposite direction, we consider unit speed horizontal geodesics $\gamma_1,\gamma_2$ on $M_1, M_2$ respectively such that $\gamma_i(0)=x_i, \gamma_i(d_i(x_i,y_i))=y_i$ for $i=1,2$. The existence of such geodesics is guaranteed by Assumption~\ref{completeness}. Let us define $\sigma_i: [0,1]\to M_i$ such that $\sigma_i(t)=\gamma_i(t d_i(x_i,y_i))$. Then for $\sigma:=(\sigma_1, \sigma_2)$, we have
\begin{align*}
\int_0^1 \|\sigma'(t)\|_{\cc H} dt=\int_0^1\sqrt{\|\sigma'_1(t)\|^2_{\cc H_1}+\|\sigma'_2(t)\|^2_{\cc H_2}} dt=\sqrt{d^2_1(x_1, y_1)+d^2_2(x_2,y_2)}.
\end{align*}
This shows that $d(x,y)\leqslant \sqrt{d^2_1(x_1, y_1)+d^2_2(x_2, y_2)}$ and the proof of item \eqref{it:tensor_metric} is complete.
	
To prove item \eqref{it:proj_metric}, we first claim that for any $a, b\in M$, $d_M(a, b)\geqslant d_N(\pi(a), \pi(b))$. Indeed, for any horizontal curve $\sigma:[0,1]\to M$ with $\sigma(0)=a, \sigma(1)=b$, $\pi\circ\sigma$ defines a horizontal curve on $N$ such that $\pi\circ\sigma(0)=\pi(a), \pi\circ\sigma(1)=\pi(b)$. Moreover, for any $t\in [0,1]$, $\|(\pi\circ \sigma)^{\prime}(t)\|_{\cc H_N}=\|d\pi_{\sigma(t)}(\sigma^{\prime}(t))\|_{\cc H_N}=\|\sigma^{\prime}(t)\|_{\cc H_M}$. Thus 

\begin{align*}
d_M(a,b)&=\inf\left\{\int_0^1 \|(\pi\circ\sigma)^{\prime}(t)\|_{\cc H_N} dt: \sigma(0)=a, \sigma(1)=b, (\pi\circ\sigma)^{\prime}\in\cc{H}_N\right\}
\\
& \geqslant d_N(\pi(a), \pi(b)).
\end{align*}
Now let $x,y\in N$. Then, any absolutely continuous horizontal curve $\gamma:[0,1]\to N$ with $\gamma(0)=x, \gamma(1)=y$ possesses a horizontal lift $\widehat{\gamma}:[0,1]\to M$ such that $\pi\circ\widehat{\gamma}=\gamma$. Let us choose $\gamma$ such that $d_N(x,y)=\int_0^1\|\gamma^{\prime}(t)\|_{\cc H_N}dt$. This implies that 
\begin{align*}
	d_N(x,y)\leqslant d_M(\widehat{\gamma}(0), \widehat{\gamma}(1))\leqslant \int_0^1 \|\widehat{\gamma}^{\prime}(t)\|_{\cc H_M} dt=\int_0^1 \|\gamma^{\prime}(t)\|_{\cc H_N} dt=d_N(x,y).
\end{align*}
Choosing $a=\widehat{\gamma}(0), b=\widehat{\gamma}(1)$ completes the proof of item \eqref{it:proj_metric}.
\end{proof}
\subsection{Proof of Theorem~\ref{thm:tensorization}} 
Stability of \eqref{eq:GB}, \eqref{eq:RP} and \eqref{eq:RLS} under tensorization follows by Theorem~\ref{thm:main} and \eqref{it:tensor_metric} in Lemma~\ref{lem:metrics}. To prove the stability of \eqref{eq:L-Y}, we observe that for any $f\in\cC^\infty_c(M_1\times\cdots\times M_n)$, $t\geqslant 0$, and $1 \leqslant i\leqslant n$,
\begin{align}\label{eq:LY_i}
	C^{(i)}_1(t)\|\nabla_{\cc H_i} \log P^M_t f\|^2_{\cc H_i} &\leqslant \frac{\Delta^{M_i}_{\cc H_i}P^M_t f}{P^M_t f}+C^{(i)}_2(t).
\end{align}
Now, \eqref{eq:subLaplacian_product} implies that $\sum_{i=1}^n\Delta^{M_i}_{\cc H_i} P^M_t f=\Delta^M_{\cc H} P^M_t f$, while the orthogonality of the horizontal distributions $(\cc H_i)_{i=1}^n$ implies 
\begin{align*}
	\sum_{i=1}^n \|\nabla_{\cc H_i} \log P^M_t f\|^2_{\cc H_i}=\|\nabla_{\cc H} \log P^M_t f\|^2_{\cc H}.
\end{align*}
Therefore, the proof of \eqref{eq:L-Y} is concluded by adding the inequalities in \eqref{eq:LY_i} for each $1\le i\le n$. 

\subsection{Proof of Theorem~\ref{thm:projection}}
Denoting the horizontal heat semigroups on $M, N$ by $P^M$ and $P^N$ respectively, we first observe that Assumption~\ref{SubmersionMeasure} implies that for all $f\in \cC^\infty_c(N)$ and $t\geqslant 0$
\begin{align}\label{eq:semigroup_proj}
P^M_t(f\circ \pi)=P^N_tf \circ \pi.
\end{align}
Since $\pi$ is a sub-Riemannian submersion, then for any $x\in M$, $d\pi_x:\cc{H}_M\to \cc{H}_N$ is an isometry, which similarly to \cite[Equation~(4.3)]{GordinaLuo2022} and  \cite[Equation~(3.4)]{GordinaLuo2024} leads to 
\begin{align}\label{eq:gg}
\|\nabla^M_{\cc H}(f\circ \pi)\|^2_{\cc H_M}=\|\nabla^N_{\cc H} f\|_{\cc H_N}^2\circ\pi,
\end{align}
where $\nabla^M_{\cc H}$ (resp. $\nabla^N_\cc{H}$) denotes the horizontal gradient on $(M,\cc H_M)$ (resp. $N, \cc{H}_N$).
Now assume that \eqref{eq:GB} holds for some $p\geqslant 1$. Then for any $t\geqslant 0$ and $f\in \cC^\infty_c(N)$, using \eqref{eq:gg} and \eqref{eq:semigroup_proj} we deduce 
\begin{align*}
	\|\nabla^N_{\cc H}P^N_t f\|^p_{\cc H_N}\circ\pi&=\|\nabla^M_{\cc H}(P^N_tf\circ\pi)\|^p_{\cc H_M}\\
	&=\|\nabla^M_{\cc H} P^M_t(f\circ \pi)\|^p_{\cc H_M} \\
	&\leqslant C(p,t) P^M_t \|\nabla^M_{\cc H} (f\circ\pi)\|_{\cc H_M} \\
	&=C(p,t)P^M_t(\|\nabla^N_{\cc H} f\|^p_{\cc H_N}\circ\pi) \\
	&=C(p,t) P^N_t\|\nabla^N_{\cc H} f\|^p_{\cc H_N}\circ \pi.
\end{align*}
Since $\pi$ is surjective, \eqref{eq:GB} follows for $P^N_t$. Stability of \eqref{eq:RP}, \eqref{eq:RLS} and \eqref{eq:L-Y} under the mapping $\pi$ is a direct consequence of \eqref{eq:semigroup_proj}.

\section{Examples}\label{sec:4}

\subsection{Kolmogorov diffusion on Euclidean spaces}
 
We consider a Kolmogorov-type diffusion operator on $\R^d\times \R^d$  given by 
\begin{align}
	Lf(x,y)=\sum_{j=1}^d x_j\frac{\partial f}{\partial y_j}(x,y)+\sum_{j=1}^d \sigma^2_j \frac{\partial^2 f}{\partial x^2_j}(x,y),
\end{align}
where $\sigma^2_j>0$ for all $j=1,\ldots, d$.

\begin{proposition}
For all $t\geqslant 0$ and $f\in \cC^1_b(\R^d\times \R^d)$ we have
\begin{align}
& |\nabla_x P_t f|^p \leqslant P_t\left(\sum_{i=1}^d\left|\frac{\partial f}{\partial x_i}+t\frac{\partial f}{\partial y_i}\right|^2\right)^{\frac{p}{2}} \text{ for all } p\geqslant 1,
\label{eq:GB-K} 
\\
& \sum_{i=1}^d \left(\frac{\partial P_tf}{\partial x_i}-\frac{1}{2}t\frac{\partial P_t f}{\partial y_i}\right)^2+\frac{t^2}{12}\left(\frac{\partial P_t f}{\partial y_i}\right)^2  \leqslant \frac{1}{\sigma^2 t}(P_t f^2 - (P_t f)^2), 
\label{eq:RP-K}
\end{align}
\begin{equation}\label{eq:RLS-K}
\begin{aligned}
& \sum_{i=1}^d \left(\frac{\partial \log P_tf}{\partial x_i}-\frac{t}{2}\frac{\partial\log P_t f}{\partial y_i}\right)^2+\frac{t^2}{12}\left(\frac{\partial \log P_t f}{\partial y_i}\right)^2 
\\
& \leqslant \frac{1}{\sigma^2 t P_t f}(P_t (f\log f) - P_t f \log(P_t f)),
\end{aligned}
\end{equation}
where $\sigma^2:=\min\{\sigma^2_j: 1\leqslant j\leqslant d\}$. 
\end{proposition}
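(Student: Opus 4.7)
The plan is to combine the commutator identities for the Kolmogorov operator with the Bakry--Émery interpolation formulas, exploiting the fact that $L=\sum_{j=1}^d L_j$ decomposes over the pairs $(x_j,y_j)$ with $L_j=x_j\partial_{y_j}+\sigma_j^2\partial_{x_j}^2$. This makes the semigroup a tensor product $P_t=\bigotimes_{j=1}^d P_t^{(j)}$ and renders the carré du champ additively separable: $\Gamma(f)=\sum_j\sigma_j^2(\partial_{x_j}f)^2\geq\sigma^2\sum_j(\partial_{x_j}f)^2$. The nilpotent commutators $[\partial_{x_j},L]=\partial_{y_j}$ and $[\partial_{y_j},L]=0$, fed into Duhamel's formula, produce the intertwinings $\partial_{y_j}P_tf=P_t\partial_{y_j}f$ and $\partial_{x_j}P_tf=P_t(\partial_{x_j}f+t\,\partial_{y_j}f)$, which drive all three inequalities.

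The gradient bound \eqref{eq:GB-K} is then immediate from the vector-valued Jensen inequality applied to the probability kernel $P_t(x,\cdot)$: using $\partial_{x_i}P_tf=P_t(\partial_{x_i}f+t\,\partial_{y_i}f)$ componentwise gives $\|\nabla_xP_tf\|^p=\bigl(\sum_i|P_t(\partial_{x_i}f+t\partial_{y_i}f)|^2\bigr)^{p/2}\leq P_t\bigl(\sum_i|\partial_{x_i}f+t\partial_{y_i}f|^2\bigr)^{p/2}$ for every $p\geq 1$.

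For \eqref{eq:RP-K} I would plug the carré du champ above into the variance interpolation $P_t(f^2)-(P_tf)^2=2\int_0^t P_s\bigl(\Gamma(P_{t-s}f)\bigr)\,ds$; Jensen gives $P_s\bigl((\partial_{x_j}P_{t-s}f)^2\bigr)\geq(P_s\partial_{x_j}P_{t-s}f)^2$, and the commutation identity combined with the semigroup property $P_sP_{t-s}=P_t$ simplifies $P_s\partial_{x_j}P_{t-s}f=\partial_{x_j}P_tf-s\,\partial_{y_j}P_tf$. Summing over $j$ and evaluating the explicit integral $\int_0^t(a-sb)^2\,ds=t\bigl[(a-\tfrac{t}{2}b)^2+\tfrac{t^2}{12}b^2\bigr]$ with $(a,b)=(\partial_{x_j}P_tf,\partial_{y_j}P_tf)$ produces exactly the left-hand side of \eqref{eq:RP-K}. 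Morally this is a pointwise manifestation of Theorem~\ref{thm:main}: the one-dimensional factor supplies the $j$-th summand, and the $\min$ over $\sigma_j^2$ corresponds to the $\max$ over the individual constants.

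The inequality \eqref{eq:RLS-K} is obtained by the same recipe applied to the entropy interpolation $P_t(f\log f)-P_tf\log P_tf=\int_0^t P_s\bigl(\Gamma(P_{t-s}f)/P_{t-s}f\bigr)\,ds$, with Jensen replaced by the Cauchy--Schwarz inequality $P_s(G^2/H)\,P_s(H)\geq(P_sG)^2$ applied to $G=\partial_{x_j}P_{t-s}f$ and $H=P_{t-s}f$ (so that $P_sH=P_tf$). The main (mild) obstacle is the bookkeeping for the time-dependent coefficients generated by the commutators and the final division by $P_tf$ to convert to the logarithmic gradient form; no deeper analytic subtleties arise because the explicit Gaussian structure of $P_t$ justifies every differentiation under the semigroup.
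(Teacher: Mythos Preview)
Your argument is correct, but it is genuinely different from the paper's. The paper treats this Proposition as an illustration of its main abstract result: it cites the one-dimensional inequalities from \cite{BaudoinGordinaMariano2020} (Propositions~2.7, 2.8, 2.10 there), identifies the control distance $d_t$ associated with $\Gamma_t$, and then invokes Theorem~\ref{thm:main} (the tensorization theorem proved via Kuwada duality and the Wasserstein/Hellinger machinery) to pass from $d=1$ to general $d$. Your route bypasses all of that transportation-inequality apparatus: the commutator identities $[\partial_{x_j},L]=\partial_{y_j}$, $[\partial_{y_j},L]=0$ give the explicit intertwinings $\partial_{y_j}P_t=P_t\partial_{y_j}$ and $\partial_{x_j}P_t f=P_t(\partial_{x_j}f+t\,\partial_{y_j}f)$, and then the Bakry--\'Emery variance/entropy interpolation together with Jensen (resp.\ Cauchy--Schwarz) reduces everything to the elementary integral $\int_0^t(a-sb)^2\,ds=t\bigl[(a-\tfrac t2 b)^2+\tfrac{t^2}{12}b^2\bigr]$. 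What you gain is a self-contained, elementary proof that never leaves $\Gamma$-calculus and actually yields a sharper constant in \eqref{eq:RP-K} (you get $\tfrac{1}{2\sigma^2 t}$); what the paper's approach buys is that the example becomes a direct corollary of the general tensorization framework, which is the point of the article. As a side remark, your intertwining gives the coefficient $t$ in \eqref{eq:GB-K} exactly as stated, whereas the paper's own proof quotes a $t/2$ from \cite{BaudoinGordinaMariano2020}, presumably a normalization discrepancy.
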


\begin{remark}
We note that in \cite{BaudoinGordinaMariano2020}, the authors considered $\sigma^2_j=\sigma^2$ for all $j=1,\ldots, d$. The above result proves similar gradient estimates for the non-isotropic case as well, that is, when $\sigma^2_j$ are not identical.
\end{remark}
\begin{proof}
We first observe that by Jensen's inequality
\begin{align*}
	\left(P_t\left(\sum_{i=1}^d \left|\frac{\partial f}{\partial x_i}+t\frac{\partial f}{\partial y_i}\right|^2\right)^{\frac{1}{2}}\right)^p\le  P_t\left(\sum_{i=1}^d\left|\frac{\partial f}{\partial x_i}+t\frac{\partial f}{\partial y_i}\right|^2\right)^{\frac{p}{2}},
\end{align*}
it suffices to prove \eqref{eq:GB-K} for $p=1$. We use an idea similar to the proof of \cite[Theorem~2]{Zhang2023}. For each $i=1,\ldots, d$, let $P^{(i)}$ denote the semigroup generated by 
	\begin{align*}
	L_i=x_i\frac{\partial}{\partial y_i}+\sigma^2_i \frac{\partial^2}{\partial x_i^2}.
	\end{align*}
	 Then for each $t>0$, $P_t=P^{(1)}_t\otimes\cdots\otimes P^{(d)}_t$. Now for $d=1$, \cite[Proposition 2.10]{BaudoinGordinaMariano2020} implies that for any $i=1,\ldots, d$, 
\begin{align}\label{eq:GB-K-1}
|\nabla_{x_i}P^{(i)}_t f|\leqslant P^{(i)}_t\left(\left|\frac{\partial f}{\partial x_i}+t\frac{\partial f}{\partial y_i}\right|\right).
\end{align}
Let $a_1,\ldots, a_d\in\bb R$ be such that $\sum_{i=1}^d a^2_i=1$. Then using \eqref{eq:GB-K-1} followed by the Cauchy-Schwarz inequality and monotonicity of $P_t$, we have 
\begin{align*}
	\sum_{i=1}^d a_i |\nabla_{x_i} P_t f|\leqslant P_t\left(\sum_{i=1}^d a_i \left|\frac{\partial f}{\partial x_i}+t\frac{\partial f}{\partial y_i}\right|\right)\leqslant P_t\left(\sum_{i=1}^d \left|\frac{\partial f}{\partial x_i}+t\frac{\partial f}{\partial y_i}\right|^2\right)^{\frac{1}{2}}.
\end{align*}
Optimizing with respect to $a_1,\ldots, a_d$ yields the inequality in \eqref{eq:GB-K} for $p=1$. 
 Next, to prove \eqref{eq:RP-K} and \eqref{eq:RLS-K}, it is known from \cite{BaudoinGordinaMariano2020} that for any $d\geqslant 1$, the control distance associated to the squared gradient $\Gamma_t(f)=\sum_{i=1}^d(\frac{\partial f}{\partial x_i}-\frac{t}{2}\frac{\partial f}{\partial y_i})^2+\frac{t^2}{12}\sum_{i=1}^d(\frac{\partial f}{\partial y_i})^2$ on $\bb{R}^d\times \bb{R}^d$ is given by  
	\begin{align*}
		d_t((x_1,y_1),(x_2,y_2))^2=4|x_1-x_2|^2+\frac{12}{t}\langle x_1-x_2,y_1-y_2\rangle+\frac{12}{t^2}|y_1-y_2|^2.
	\end{align*}
As a result, for any $f\in\cC^1(\bb R^d\times \bb{R}^d)$, 
\begin{align*}
\Gamma_t(f)(x)=\lim_{r\to 0}\sup_{y:d_t(x,y)\leqslant r} \left|\frac{f(x)-f(y)}{d_t(x,y)}\right|.
\end{align*}
Again, invoking \cite[Proposition 2.7, 2.8]{BaudoinGordinaMariano2020} for $d=1$, both \eqref{eq:RP-K} and \eqref{eq:RLS-K} follow from Theorem~\ref{thm:main}.
\end{proof}

\subsection{Kinetic Fokker-Planck equations} 

Consider the stochastic differential equation on $\R^d\times \R^d$
\begin{align}
\begin{cases}
dX_t=Y_t dt 
\\
d Y_t= dW_t-\nabla V(X_t)dt-Y_t dt,
\end{cases}
\end{align}
where $\left\{ W_t \right\}_{t\geqslant 0}$ is an $\bb R^d$-valued Brownian motion and $V(x)=\sum_{i=1}^d V_i(x_i)$ with $V_i\geqslant 0$. The semigroup associated with the equation is denoted $P_t$. Under the assumption of polynomial growth of the potentials $V_i$ as in \cite[Theorem~A.8]{Villani2009}, that is, 
\begin{align}\label{eq:villani}
	|V^{\prime \prime}_i(x)|\leqslant C(1+V^{\prime}_i(x)) \text{ for all } 1\leqslant i\leqslant d \text{ and for some constant } C,
\end{align}
we have the following result.

\begin{proposition}
	Assume that \eqref{eq:villani} holds. Then, there exists a dimension-independent constant $c$ such that for all $f\in\cC^1(\bb R^d\times \bb R^d)$ and $t>0$, 
	\begin{align}\label{eq:Fokker_Plank_RPI}
		\|\nabla P_t f\|^2\leqslant \frac{c}{(t\wedge 1)^3}P_t f^2.
	\end{align}
\end{proposition}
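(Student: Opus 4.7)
The plan is to exploit the product structure of the potential to reduce to the one-dimensional case and then invoke Theorem~\ref{thm:main}. Because $V(x)=\sum_{i=1}^d V_i(x_i)$, the infinitesimal generator
\[
L=\tfrac12\Delta_y - y\cdot\nabla_y + y\cdot\nabla_x - \nabla V(x)\cdot\nabla_y
\]
decomposes as a sum $L=\sum_{i=1}^d L^{(i)}$ of $d$ pairwise commuting operators, where $L^{(i)}$ acts only on the coordinate pair $(x_i,y_i)$. Consequently the SDE splits into $d$ independent one-dimensional kinetic Fokker-Planck systems, and the semigroup factors as
\[
P_t = P_t^{(1)}\otimes\cdots\otimes P_t^{(d)},
\]
where each $P_t^{(i)}$ is the semigroup on $\mathbb R\times\mathbb R$ associated with the potential $V_i$.

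Next, I would establish a one-dimensional reverse Poincar\'e inequality of the form
\[
\|\nabla P_t^{(i)} f\|^2 \leqslant \frac{c}{(t\wedge 1)^3}\bigl(P_t^{(i)}(f^2)-(P_t^{(i)} f)^2\bigr)
\]
for a constant $c$ depending only on the universal constant $C$ appearing in Villani's assumption \eqref{eq:villani}, but independent of $i$. The short-time factor $(t\wedge 1)^{-3}$ is the hallmark scaling for hypoelliptic diffusions of Kolmogorov type: the noise acts in $y$ but the gradient in $x$ must be controlled through one bracket, producing a $t^{-3}$ singularity at $t\downarrow 0$. The existence of such a bound in one dimension under \eqref{eq:villani} is precisely the type of hypocoercive estimate developed in \cite{Baudoin2017a} and \cite{Villani2009}; in the framework of the present paper it can also be read off from the Gamma calculus done for one-dimensional Langevin dynamics, together with the fact that \eqref{eq:villani} furnishes the needed bounds on $V_i''$ in terms of $V_i'$ uniformly in $i$.

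Armed with the one-dimensional inequality, I would apply Theorem~\ref{thm:main} to the tensor product $P_t = P_t^{(1)}\otimes\cdots\otimes P_t^{(d)}$. Since every factor satisfies the reverse Poincar\'e inequality with the \emph{same} constant $c(t\wedge 1)^{-3}$, the theorem yields
\[
\|\nabla P_t f\|^2 \leqslant \frac{c}{(t\wedge 1)^3}\bigl(P_t(f^2)-(P_t f)^2\bigr),
\]
and bounding the right-hand side crudely by $\tfrac{c}{(t\wedge 1)^3}P_tf^2$ gives \eqref{eq:Fokker_Plank_RPI}. The maximum in Theorem~\ref{thm:main} is harmless precisely because the one-dimensional constant does not depend on $i$, which is how dimension independence emerges.

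The main obstacle is the one-dimensional reverse Poincar\'e inequality with the uniform-in-$i$ constant. Two things must be checked: first, that the constant in the 1D hypocoercive estimate really depends only on the constant $C$ of \eqref{eq:villani} and not on more delicate features of $V_i$ (so that it is the same for every $i$); and second, that the Euclidean gradient norm $\|\nabla f\|^2 = \sum_i(|\partial_{x_i} f|^2+|\partial_{y_i}f|^2)$ used in the conclusion is compatible with the ``strong upper gradient'' formalism of Section~\ref{sec:1}, so that Theorem~\ref{thm:main} does indeed produce the stated Euclidean bound. The metric on $\mathbb R^d\times\mathbb R^d$ naturally associated with the 1D carr\'e du champ satisfies \eqref{eq:square_metric} with respect to the one-factor metrics, so both points are technical but routine.
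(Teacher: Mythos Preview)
Your proposal follows exactly the paper's route: factor $P_t=\bigotimes_i P_t^{(i)}$ using the separable potential, establish the one-dimensional estimate with a constant depending only on $C$ from \eqref{eq:villani}, and invoke Theorem~\ref{thm:main}. The only substantive discrepancy is the source of the one-dimensional input: the paper cites \cite[Corollary~3.3]{GuillinWangFY2012}, whose Bismut-formula approach yields the pointwise bound $\|\nabla_{(x_i,y_i)} P_t^{(i)} f\|^2\le c(t\wedge 1)^{-3}P_t^{(i)} f^2$ directly, whereas your references \cite{Baudoin2017a,Villani2009} furnish $L^2$-hypocoercive or gradient-commutation estimates and do not by themselves deliver the required pointwise reverse Poincar\'e bound under \eqref{eq:villani} alone---so that step, which you correctly flag as the main obstacle, needs the Guillin--Wang citation rather than the ones you propose.
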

The constant $c$ only depends on the constant $C$ in \eqref{eq:villani}. It is worth noting that both in the pointwise bound (see \cite{GuillinWangFY2012}) and the $L^2$ bound (see \cite[Theorem~A.8]{Villani2009}), the constants appearing on the right hand side of the inequalities of the form \eqref{eq:Fokker_Plank_RPI} are dimension-dependent.
\begin{proof}
Let $P^{(i)}$ denote the semigroup associated to the one-dimensional diffusion
\begin{align*}
		dX^{(i)}_t&=dY^{(i)}_t \\
		dY^{(i)}_t&=dW^{(i)}_t-V^{\prime}_i(X_t)dt-Y^{(i)}_tdt.
\end{align*}	
Then $P=P^{(1)}\otimes\cdots\otimes P^{(d)}$. Invoking \cite[Corollary 3.3]{GuillinWangFY2012}, for each $1\leqslant i\leqslant d$ we have 
\begin{align*}
	\|\nabla_{(x_i, y_i)}P^{(i)}_t f\|^2\leqslant \frac{c}{(t\wedge 1)^3}P^{(i)}_t f^2.
\end{align*}
Hence the proposition follows from Theorem~\ref{thm:main}.
\end{proof}

\begin{proposition}
Assume that there exist constants $m,M >0$, such that $\sqrt{M}-\sqrt{m} \leqslant 1$ and
\[
m \leqslant \nabla^2 V \leqslant M.
\]
There exist dimension-independent constants $c_1,c_2>0$ such that for all $f\in\cC^1(\bb R^d\times \bb R^d)$ and $t>0$, 
	\begin{align}
		\|\nabla P_t f\|^2\leqslant c_1 e^{-c_2 t} P_t (\| \nabla f\|^2 )
	\end{align}
\end{proposition}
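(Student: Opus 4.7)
The plan is to reduce to dimension one via Theorem~\ref{thm:main} and then obtain a one-dimensional gradient bound by synchronous coupling. Because the potential is separable, $V(x)=\sum_{i=1}^d V_i(x_i)$, the Langevin SDE splits into $d$ independent one-dimensional systems in the pairs $(x_i,y_i)$, and hence $P_t=P^{(1)}_t\otimes\cdots\otimes P^{(d)}_t$. The hypothesis $mI\leqslant \nabla^2 V\leqslant MI$ transfers component-wise into $m\leqslant V''_i\leqslant M$ with the same constants $m,M$. Thus, once we establish the one-dimensional estimate $\|\nabla P^{(i)}_t f\|^2\leqslant c_1 e^{-c_2 t}\, P^{(i)}_t(\|\nabla f\|^2)$ on $\bb R\times\bb R$, with $c_1,c_2$ depending only on $m,M$, Theorem~\ref{thm:main} applied with $p=2$ propagates the bound, with the same dimension-independent constants, to the tensor-product semigroup $P_t$.

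For the one-dimensional estimate, the plan is to use the Jacobian-flow representation of the gradient. Let $Z^{z_0}_t=(X_t,Y_t)$ denote the solution to the SDE started at $z_0=(x_0,y_0)$ and set $J_t:=\nabla_{z_0}Z^{z_0}_t$. Because the diffusion coefficient is constant, $J_t$ evolves pathwise by the random linear ODE
\begin{equation*}
\dot{J}_t=A(X_t)\,J_t,\qquad A(x)=\begin{pmatrix} 0 & 1 \\ -V''(x) & -1\end{pmatrix},\qquad J_0=I.
\end{equation*}
Differentiating $P_t f(z_0)=\mathbb E[f(Z^{z_0}_t)]$ under the expectation gives $\nabla_{z_0}P_tf(z_0)=\mathbb E[J_t^{\top}\nabla f(Z^{z_0}_t)]$, so Cauchy--Schwarz reduces the problem to a \emph{pathwise} bound $\|J_t\|_{\mathrm{op}}^2\leqslant c_1 e^{-c_2 t}$.

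The hard part is producing this pathwise bound with constants independent of the trajectory $(X_t)$. Since $V''(X_t)\in[m,M]$, the issue is equivalent to finding a positive definite $2\times 2$ matrix $Q$ and $\kappa>0$, depending only on $m,M$, such that the linear matrix inequality $A(\alpha)^{\top}Q+QA(\alpha)\leqslant -\kappa Q$ holds for every $\alpha\in[m,M]$. As $A(\alpha)^{\top}Q+QA(\alpha)$ is affine in $\alpha$, it suffices to verify the inequality at the two endpoints $\alpha=m,M$ and invoke convexity of the cone of negative semidefinite matrices. A natural ansatz is $Q=\begin{pmatrix} a & b \\ b & 1\end{pmatrix}$ with $a,b$ tuned around the centre $\alpha_0=\sqrt{mM}$; the constraint $\sqrt{M}-\sqrt{m}\leqslant 1$ is precisely what guarantees that such $Q$ and $\kappa$ can be chosen with quantitative bounds in $m,M$. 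Once the LMI is solved, Gronwall applied pathwise to $t\mapsto J_t^{\top}QJ_t$ yields $J_t^{\top}QJ_t\leqslant e^{-\kappa t}Q$ as a matrix inequality, and norm equivalence on $\bb R^2$ converts this into $\|J_t\|_{\mathrm{op}}^2\leqslant c_1 e^{-\kappa t}$ with $c_1=c_1(m,M)$, closing the one-dimensional step and, via Theorem~\ref{thm:main}, the proof.
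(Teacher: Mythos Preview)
Your overall strategy matches the paper's: reduce to a one–dimensional kinetic Fokker--Planck factor via the separable potential and then invoke Theorem~\ref{thm:main} with $p=2$ to tensorize. The paper's proof of the one–dimensional step is simply a citation of \cite[Theorem~2.12]{Baudoin2017a}, which obtains the gradient bound through a modified Bakry--\'Emery calculus (a twisted carr\'e du champ in the spirit of Villani). Your route is different: you use the pathwise Jacobian representation $\nabla P_tf=\mathbb{E}[J_t^{\top}\nabla f(Z_t)]$ together with a common quadratic Lyapunov function for the family $\{A(\alpha):\alpha\in[m,M]\}$. This is a legitimate and more elementary alternative; in fact the LMI you are solving is the linear-algebra shadow of exactly the same computation that underlies the choice of twisted gradient in \cite{Baudoin2017a}, so the two arguments are morally equivalent. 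Your approach has the advantage of being self-contained and transparent about where the constants come from; the paper's citation is shorter and covers non-separable $V$ in each factor without extra work.

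One point you should tighten: you assert that the constraint $\sqrt{M}-\sqrt{m}\leqslant 1$ is ``precisely what guarantees'' the existence of $Q$ and $\kappa>0$, but you do not carry out the verification. With $Q=\begin{pmatrix} a & b\\ b & 1\end{pmatrix}$ one computes $A(\alpha)^{\top}Q+QA(\alpha)=\begin{pmatrix} -2b\alpha & a-b-\alpha\\ a-b-\alpha & 2(b-1)\end{pmatrix}$, and negative definiteness for all $\alpha\in[m,M]$ reduces (after the endpoint argument you mention) to $(a-b-\alpha)^2<4b(1-b)\alpha$ for $\alpha\in\{m,M\}$; maximizing $b(1-b)$ at $b=\tfrac12$ this becomes $\sqrt{M}-\sqrt{m}<1$, with a suitable $a$ then chosen in the nonempty interval. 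At the boundary $\sqrt{M}-\sqrt{m}=1$ the strict inequality degenerates, so you should either work with the open condition or argue by a limiting procedure; the paper's citation has the same borderline issue, so this is a cosmetic rather than structural gap.
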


\begin{proof}
The proposition follows from Theorem~\ref{thm:main} and \cite[Theorem 2.12]{Baudoin2017a}.
\end{proof}

\subsection{Lie groups with transverse symmetries}\label{sec:3}

Denote by $\bG$ a real or complex Lie group, by $\mathfrak{g}\cong T_{e} \bG$ its Lie algebra identified with the tangent space at the identity $e$. For each $A \in \mathfrak{g}$, let $\widetilde{A}$ denote the unique extension of $A$ to a left-invariant vector field on $\bG$. For $n\geqslant 1$ and $1\leqslant m\leqslant n$ we consider a $2n+m$-dimensional Lie group $\bG$ whose Lie algebra $(\mathfrak{g}, \langle\cdot,\cdot \rangle)$ has an orthonormal basis $\{X_1, \ldots, X_{2n}, Z_1, \ldots, Z_m\}$ such that for $1\leqslant l\leqslant m$
\begin{align*}
	[X_i, X_j]=\sum_{l=1}^m A_{l; i,j}Z_l \quad\text{and} \quad [X_i, Z_l]=\sum_{j=1}^{2n} R_{l; i,j} X_j,
\end{align*}
where the family of matrices $\left\{ A_l, R_l \right\}_{l=1}^n$ satisfies the following conditions
\begin{enumerate}[label=(\Alph*)]
	\item $\left\{ A_l, R_l \right\}_{l=1}^m$ are $2n\times 2n$ skew-symmetric matrices.
	\item \label{it:B} $\left\{ A_l \right\}_{l=1}^m$ are linearly independent and of full rank.
	\item $A_lA_k=A_kA_l, \ R_lR_k=R_kR_l, \ A_lR_k=R_kA_l$ for all $1\leqslant l, k\leqslant m$,
\end{enumerate}
and $A_{l;i,j}$ (resp. $R_{l;i,j}$) denotes the $(i,j)^{th}$ entry of $A_l$ (resp. $R_l$).
Note that these assumptions imply that the corresponding left-invariant vector fields $(\wt{X}_i)_{i=1}^{2n}$ define a sub-Riemannian structure on $\bG$ with the horizontal distribution $\cc{H}_{\operatorname{G}}=\Span\{\wt{X}_i: 1\leqslant i\leqslant 2n\}$, and the sub-Riemannian metric $(g_x(\cdot, \cdot))_{x\in \bG}$ given by 
\begin{align*}
	g_x(\wt{X}(x), \wt{Y}(x))=\langle X, Y\rangle \text{ for all } x \in M,\  X, Y \in \cc{H}_{\operatorname{G}}.
\end{align*}
The Lie group $\operatorname{G}$ is equipped with the left-invariant Haar measure and we consider the sub-Laplacian on $\bG$ defined by 
\begin{align*}
\Delta^{\operatorname{G}}_{\cc H}=\sum_{i=1}^{2n} \wt{X}^2_i.
\end{align*}
We note that $\bG$ is a sub-Riemannian manifold with a transverse symmetry as described in \cite{BaudoinGarofalo2017}. 
\begin{example} Let us consider $n=m=1$. Then
	\begin{itemize}[leftmargin=.5cm]
	\item the 3-dimensional Heisenberg group $\bb{H}$ corresponds to the case when $A_1=\begin{pmatrix} 0 & 1 \\ -1 & 0 \end{pmatrix}$, and $R_1=\mathbf{0}_2$.
	\item $\operatorname{SU}(2)$, the group of all $2\times 2$ unitary matrices with determinant equal to $1$, corresponds to the case when $A_1=R_1=\begin{pmatrix} 0 & 1 \\ -1 & 0 \end{pmatrix}$.
	\item $\operatorname{SL}(2)$, the group of all invertible real matrices with determinant equal to $1$, corresponds to the case when $A_1=-R_1=\begin{pmatrix} 0 & 1 \\ -1 & 0 \end{pmatrix}$.
	\end{itemize}
\end{example}

We start by explaining that while the techniques introduced in \cite{BaudoinGarofalo2017} work in this setting, the constants one gets are dimension-dependent. In terms of the notation in the aforementioned paper, it follows that for all $f\in\cC^\infty(\bG)$
\begin{align*}
	\cc{R}(f)&=\sum_{i=1}^{2n}\sum_{j=1}^{2n}\left(\sum_{k=1}^{2n} A_{l;i,k}R_{l; k,j}\right)X_i f X_jf+\frac{1}{2}\sum_{1\leqslant i\leqslant j\leqslant 2n}\left(\sum_{l=1}^m A_{l;i,j}Z_l f\right)^2 
	\\
	\cc{T}(f)&=\sum_{l=1}^m \sum_{i=1}^{2n}\left(\sum_{j=1}^{2n} A_{l;i,j} X_j f\right)^2 =\sum_{l=1}^m\|A_l\nabla_{\cc H}f\|^2_{\cc H}.
\end{align*}
A simple computation shows that for all $f\in\cC^\infty(\bG)$, one has 
\begin{align*}
	\cc{R}(f)&\geqslant \rho\|\nabla_{\cc H} f\|^2_{\cc H}+\gamma\|\nabla_{\cc V} f\|^2_{\cc V}, \\ \cc{T}(f)&\le\kappa\|\nabla_{\cc V}f\|^2_{\cc V},
\end{align*}
where $\rho$ is the minimum eigenvalue of
\begin{align}\label{eq:Lambda}
	\Lambda=\sum_{l=1}^m A_lR_l,
\end{align}
and
\begin{align}\label{eq:rho_kappa}
	\gamma=\frac{1}{2}\inf_{\|x\|=1}\sum_{1\leqslant i\leqslant j\leqslant 2n}\left(\sum_{l=1}^m A_{l;i,j}x_j\right)^2, \quad \kappa=\sup_{\|x\|=1}\sum_{l=1}^m\|A_l x\|^2.
\end{align}
As a result, \cite[Theorem 2.19]{BaudoinGarofalo2017} implies that group $\bG$ satisfies the generalized curvature-dimension inequality $\operatorname{CD}(\rho, \gamma, \kappa, 2n)$ with respect to the sub-Laplacian $\Delta^{\operatorname{G}}_{\cc H}$. However, in general the parameters $\rho,\gamma$ and $\kappa$ depend on the dimension of $\bG$, see Remark~\ref{r.4.2} and Proposition~\ref{prop:optimal_constants} below. 

To this end, we explain our method which leads to gradient estimates for the heat kernel $P^{\operatorname{G}}_t=e^{t\Delta^{\operatorname{G}}_{\cc H}}$ on $\bG$ that do not depend on $\gamma, \kappa$, and $n$. First, we show that the Lie group $\bG$ can be reduced to a product of $3$-dimensional model groups introduced in \cite{BakryBaudoinBonnefont2009}. We note that $\Lambda$ defined in \eqref{eq:Lambda} is a symmetric matrix which is unitary equivalent to a diagonal matrix of the form 
\[D=\begin{pmatrix}
	D_{1} & & \\
	& \ddots & \\
	& & D_{n}
\end{pmatrix}
\]
where $D_i=\rho_iI_{2}$, $I_2$ being the $2\times 2$ identity matrix, see Lemma~\ref{prop:diagonalization} for details. For each $1\leqslant i\leqslant n$, let $\bb{M}(\rho_i)$ denote the $3$-dimensional model space as introduced in \cite{BakryBaudoinBonnefont2009}, that is, $\bb M(\rho_i)$ is a simply connected Lie group  whose Lie algebra is given by $\mathfrak{m}_i=\Span\{X^{\prime}_{2i-1}, X^{\prime}_{2i}, Z'_i \}$ such that 
\begin{align}
	[X^{\prime}_{2i-1}, X^{\prime}_{2i}]=Z^{\prime}_i, \quad [X^{\prime}_{2i-1}, Z^{\prime}_i]=-\rho_i X^{\prime}_{2i}, \quad [X^{\prime}_{2i}, Z^{\prime}_i]=\rho_i X^{\prime}_{2i-1},
\end{align}
and $\left\{X^{\prime}_{2i-1}, X^{\prime}_{2i}, Z'_i \right\}$ forms an orthonormal basis for $\mathfrak{m}_i$. For each $1\le i\le n$, $\bb{M}(\rho_i)$ is equipped with the sub-Riemannian structure where the horizontal distribution is given by $\cc{H}_i=\Span\{X^{\prime}_{2i-1}, X^{\prime}_{2i}\}$. Denoting 
\begin{align}\label{eq:M}
\bb M=\bb M(\rho_1)\times\cdots\times \bb M(\rho_n),
\end{align}
 we observe that $\bb M$ is also a connected Lie group with Lie algebra $\mathfrak{m}=\mathfrak{m}_1\oplus\cdots\oplus\mathfrak{m}_n$. According to the discussion in Subsection~\ref{sec:3.1}, $\bb{M}$ is equipped with a sub-Riemmanian structure where the horizontal distribution is given by $\cc{H}_{\bb M}=\cc{H}_1\oplus\cdots\oplus \cc{H}_n$. Due to the existence of a sub-Riemannian submersion from $\mathbb{M}$ to $\operatorname{G}$, and by  Proposition~\ref{thm:submersion}, we obtain the following dimension-independent inequalities for the horizontal heat semigroup $(P^{\operatorname{G}}_t)_{t\ge 0}$.
\begin{theorem}\label{thm:transverse}
	The horizontal heat semigroup $(P^{\operatorname{G}}_t)_{t\geqslant 0}$ on $\bG$ satisfies \eqref{eq:RP}, \eqref{eq:RLS}, and \eqref{eq:L-Y}. More precisely, one has for all $t>0$ and $f\in\cC^\infty_c(\operatorname{G})$,
	
	\begin{align}
		\|\nabla_{\cc H} P^{\operatorname{G}}_t f\|^2_{\cc H} & \leqslant \frac{5+\rho^- t}{2t}(P^{\operatorname{G}}_tf^2-(P^{\operatorname{G}}_t f)^2) \label{eq:RP-Tr} 
	\end{align}
	and when $f>0$,
	\begin{align}
		P^{\operatorname{G}}_t f\|\nabla_{\cc H}\log P^{\operatorname{G}}_t f\|^2_{\cc H} & \leqslant \frac{5+\rho^- t}{t} \left(P^{\operatorname{G}}_t(f\log f)-P^{\operatorname{G}}_t f\log(P^{\operatorname{G}}_t f)\right), \label{eq:RLS-Tr}  \\
		\|\nabla_{\cc H}\log P^{\operatorname{G}}_t f\|^2_{\cc H}&\leqslant \left(4-\frac{2\rho t}{3}\right)\frac{\Delta^{\operatorname{G}}_{\cc H}P^{\operatorname{G}}_t f}{P^{\operatorname{G}}_t f}+\frac{n\rho^2}{3}t-4n\rho+\frac{16n}{t}, \label{eq:li_yau} 
	\end{align}
	where $\Lambda$ is defined in \eqref{eq:Lambda}, $\rho$ is the minimum eigenvalue of $\Lambda$, and $\rho^-=\max\{0,-\rho\}$.
	Additionally, when $\Lambda$ is non-negative definite, we have the following results. 
	\begin{enumerate}[leftmargin=*]
		\item \label{it:thm_G_2a} $(P^{\operatorname{G}}_t)_{t\geqslant 0}$ satisfies \eqref{eq:GB}, that is, for all $p>1$, $f\in \cC^\infty_c(\bG)$ and $t>0$, 
		\begin{align}\label{eq:GB-Tr}
			\|\nabla_{\cc H} P^{\operatorname{G}}_t f\|_{\cc H}\leqslant C_p e^{-t\rho}\left(P^{\operatorname{G}}_t \|\nabla_{\cc H} f\|^p\right)^{\frac{1}{p}},
		\end{align}
		In fact, one has 
		\begin{align}\label{eq:GB-Tr2}
			\|\nabla_{\cc H}P^{\operatorname{G}}_t f\|^2_{\cc H}+\|\nabla_{\cc V}P^{\operatorname{G}}_t f\|^2_{\cc V}\leqslant C(P^{\operatorname{G}}_t\|\nabla_{\cc H} f\|^2_{\cc H}+P^{\operatorname{G}}_t\|\nabla_{\cc V} f\|^2_{\cc V})
		\end{align}
		for some positive constant $C$ independent of $n$.
		\item \label{it:thm_G_2b} 
		For all $0<s<t$ and nonnegative function $f\in\cC_b(\bG)$ we have
		\begin{align}\label{eq:PHI}
			P^{\operatorname{G}}_t f(x)\leqslant P^{\operatorname{G}}_t f(y) \left(\frac{t}{s}\right)^{8n}\exp\left(\frac{4d^2(x,y)}{t-s}\right). \tag{PHI} 
		\end{align}
	\end{enumerate}
\end{theorem}

\begin{remark}\label{r.4.2}
	We note that \eqref{eq:RP}, \eqref{eq:RLS} and \eqref{eq:L-Y} for $\bG$ could also be obtained using the generalized curvature-dimension inequality using the results in \cite{BaudoinBonnefont2012}. More precisely, \cite[Proposition~3.1 and 3.2]{BaudoinBonnefont2012} shows that the generalized curvature-dimension inequality $\operatorname{CD}(\rho,\gamma,\kappa,\infty)$ implies 
	\begin{align}
		tP^{\operatorname{G}}_t\|\nabla_{\cc H}\log P^{\operatorname{G}}_t f\|_{\cc H}^2&\leqslant \left(1+\frac{2\kappa}{\gamma}+\rho^{-}\right) (P^{\operatorname{G}}_t(f\log f)-(P^{\operatorname{G}}_t f)(\log P^{\operatorname{G}}_t f)) 
		\\
		t\|\nabla_{\cc H} P^{\operatorname{G}}_t f\|_{\cc H}^2&\leqslant \frac{1}{2}\left(1+\frac{2\kappa}{\gamma}+\rho^{-}\right)(P^{\operatorname{G}}_t f^2-(P^{\operatorname{G}}_t f)^2),
	\end{align}
	and \cite[Theorem 6.1]{BaudoinGarofalo2017} implies the Li-Yau estimate 
	\begin{align*}
		& \|\nabla_{\cc H}\log P^{\operatorname{G}}_t f\|^2_{\cc H}
		\\
		& \leqslant \left(1+\frac{3\kappa}{2\gamma}-\frac{2\rho t}{3}\right)\frac{\Delta_{\cc H}P^{\operatorname{G}}_t f}{P^{\operatorname{G}}_t f}+\frac{n\rho^2 t}{3}-n\rho\left(1+\frac{3\kappa}{2\gamma}\right)+\frac{n(1+\frac{3\kappa}{2\gamma})^2}{t}.
	\end{align*}
	In the next proposition we show that the constants in Theorem~\ref{thm:transverse} are sharper. 
\end{remark}

\begin{proposition}\label{prop:optimal_constants}
	Let $\rho,\gamma$ and $\kappa$ be as in \eqref{eq:rho_kappa}. Then for any $n\geqslant 1$ and $m=n$, $\frac{\kappa}{\gamma}\geqslant 2$. Moreover, for any $1\leqslant m\leqslant n$, one can choose a family of skew-symmetric commuting matrices $\left\{ A_l \right\}_{1\leqslant l\leqslant m}$ such that $\frac{\kappa}{\gamma}=\frac{2(n^m-1)}{n(n-1)}$. 	
\end{proposition}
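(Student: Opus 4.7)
Since $\{A_l\}_{l=1}^m$ is a commuting family of real skew-symmetric matrices, by the simultaneous block-diagonalization already invoked in the proof of Theorem~\ref{thm:transverse} (Lemma~\ref{prop:diagonalization}) there exist an orthogonal matrix $U \in \mathrm{O}(2n)$ and real scalars $\lambda_{l,k}$ such that
\[
U^T A_l U = \bigoplus_{k=1}^n \lambda_{l,k} J, \qquad J = \begin{pmatrix} 0 & 1 \\ -1 & 0 \end{pmatrix}.
\]
Define $M \in \mathbb{R}^{n \times m}$ by $M_{k,l} = \lambda_{l,k}$; condition~(B) is equivalent to $M$ having rank $m$ and no zero entries. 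A direct computation using $\sum_l A_l^T A_l = U\bigl(\bigoplus_k \|M_{k,\cdot}\|^2 I_2\bigr) U^T$ and $\sum_l x_l A_l = U\bigl(\bigoplus_k (Mx)_k J\bigr) U^T$ yields
\[
\kappa = \max_{1\leqslant k \leqslant n}\|M_{k,\cdot}\|^2, \qquad 2\gamma = \sigma_{\min}(M)^2, \qquad \frac{\kappa}{\gamma} = \frac{2\max_k\|M_{k,\cdot}\|^2}{\sigma_{\min}(M)^2},
\]
reducing both claims of the proposition to statements about $n \times m$ real matrices.

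\textbf{Lower bound when $m = n$.} Here $M$ is an invertible $n \times n$ matrix, and the elementary estimate
\[
\sigma_{\min}(M)^2 = \lambda_{\min}(M^T M) \leqslant \frac{1}{n}\operatorname{tr}(M^T M) = \frac{1}{n}\sum_{k=1}^n \|M_{k,\cdot}\|^2 \leqslant \max_k \|M_{k,\cdot}\|^2
\]
immediately gives $\kappa/\gamma \geqslant 2$.

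\textbf{Construction achieving the stated value.} For $1\leqslant m\leqslant n$, the plan is to exhibit $M$ with $M^T M = \operatorname{diag}(1, n, n^2, \ldots, n^{m-1})$ and with all row-norms of $M$ equal. Once such $M$ exists, $\sigma_{\min}(M)^2 = 1$ and the trace identity $\operatorname{tr}(MM^T) = \operatorname{tr}(M^T M) = (n^m-1)/(n-1)$ forces every row-squared-norm to equal $(n^m-1)/(n(n-1))$, yielding $\kappa/\gamma = 2(n^m-1)/(n(n-1))$. Existence follows from the Schur--Horn theorem: the constant diagonal vector with entries $c = (n^m-1)/(n(n-1))$ is majorized by the eigenvalue sequence $(n^{m-1}, n^{m-2}, \ldots, 1, 0, \ldots, 0)$ of $MM^T$, as checked by comparing the partial sums $kc$ with $(n^m - n^{m-k})/(n-1)$ for $k \leqslant m$, with equality at $k = n$. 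A generic matrix on the resulting Schur--Horn level set has all entries nonzero (the vanishing of an entry is an algebraic condition), so condition~(B) holds; setting $R_l = 0$ then produces a step-two Carnot group realizing this configuration. The main obstacle lies exactly here: for $n$ admitting a Hadamard-type matrix one could simply take $M = (\sqrt{n^{l-1}}\,u_l)_l$ with $\{u_l\}$ a flat orthonormal system $|u_{l,k}| = 1/\sqrt{n}$, but such flat systems do not exist for every $n$ (for instance $n = 3$), forcing reliance on the non-constructive Schur--Horn existence result.
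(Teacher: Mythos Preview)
Your argument is sound. The block-diagonalization step and the trace/averaging proof of the $m=n$ lower bound are essentially the paper's. Where you genuinely diverge is the construction in the second claim. The paper simply writes down the explicit choice $\lambda_{il}=i^{\,l-1}$ and then reads off $\kappa$ and $\gamma$ from the resulting row- and column-square-sums of $M=(\lambda_{il})$, obtaining $\kappa=(n^{m}-1)/(n-1)$ and $\gamma=n/2$ directly. You instead engineer $M$ abstractly: you prescribe the singular values $1,\sqrt{n},\dots,\sqrt{n^{\,m-1}}$ together with constant row norms, and then invoke Schur--Horn (plus a genericity argument for the no-zero-entry condition) to guarantee existence. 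The paper's route is entirely explicit and needs no majorization theory; yours is non-constructive but has the virtue that it computes $2\gamma$ as $\sigma_{\min}(M)^2$, which is precisely what the infimum in \eqref{eq:rho_kappa} gives after block-diagonalization. The paper's proof, by contrast, evaluates $\gamma$ as half the smallest \emph{column} norm squared of $M$, a quantity that coincides with $\tfrac12\sigma_{\min}(M)^2$ only when the columns of $M$ are orthogonal --- which the Vandermonde choice does not arrange --- so if one insists on your formula for $\gamma$, pushing the paper's explicit construction through requires extra care. Your Schur--Horn argument sidesteps that issue cleanly.
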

To prove the above results, we first observe that the vector fields $\{\wt{X}_i\}_{i=1}^{2n}$ can be \emph{decoupled} without changing the Lie algebra structure and the sub-Laplacian.

\begin{lemma}\label{prop:diagonalization}
	Without loss of generality we can assume that 
	\begin{equation}\label{eq:A_R}
		\begin{aligned}
		R_l&=\mathbf{0}_{2n-2r}\oplus \begin{pmatrix}0 & -\mu_{1l} \\
				\mu_{1l} & 0 \end{pmatrix}\oplus\cdots\oplus \begin{pmatrix} 0 & -\mu_{rl} \\ \mu_{rl} & 0 \end{pmatrix} \\
				A_l&=\begin{pmatrix}0 & -\lambda_{1l} 
				\\
				\lambda_{1l} & 0 \end{pmatrix}\oplus\cdots\oplus 
			\begin{pmatrix} 0 & -\lambda_{nl} \\ \lambda_{nl} & 0 \end{pmatrix}
		\end{aligned}
	\end{equation}
	for some $0 \leqslant r\leqslant n$ and real numbers $\lambda_{il}, \mu_{il}$. Moreover, for each $1\leqslant l\leqslant m$, there exist $1\leqslant i\leqslant n,1\leqslant j\leqslant r$ such that $\lambda_{il}\not= 0, \mu_{jl}\not= 0$.
\end{lemma}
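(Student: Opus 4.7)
The plan is to realize the desired normal form as the output of a single orthogonal change of the horizontal frame $\{X_1,\ldots,X_{2n}\}$ that simultaneously block-diagonalizes the commuting family of real skew-symmetric matrices $\{A_l,R_l\}_{l=1}^m$; one then checks that such a change of frame preserves both the Lie bracket structure and the sub-Laplacian.

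For the main algebraic step I would invoke the simultaneous real canonical form for a commuting family of real skew-symmetric matrices. Since the matrices $\{iA_l,iR_l\}_{l=1}^m$ are Hermitian and pairwise commute by (A) and (C), they are simultaneously unitarily diagonalizable over $\mathbb{C}$. Pairing complex-conjugate joint eigenlines yields real two-dimensional invariant subspaces on each of which one can choose a real orthonormal basis putting every $A_l$ and $R_l$ simultaneously into standard $2\times 2$ skew-symmetric form. Concretely, one obtains a real orthogonal $O\in O(2n)$ such that every $O^TA_lO$ and $O^TR_lO$ is block-diagonal with $2\times 2$ skew-symmetric or $1\times 1$ zero blocks in the same locations. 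The full-rank assumption (B) forces the joint kernel to be trivial, eliminating all $1\times 1$ blocks and producing exactly $n$ blocks of size $2\times 2$.

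I would then reorder the blocks (by a further orthogonal permutation) so that the first $n-r$ are those lying in the joint kernel of $\{R_l\}_{l=1}^m$ (where every $\mu_{il}=0$) and the remaining $r$ carry a non-trivial action of at least one $R_l$; composition with $O$ yields the form \eqref{eq:A_R}. To finish, I would verify that the change of frame $X_i':=\sum_j O_{ji}X_j$ leaves $\{\wt X_i'\}$ an orthonormal left-invariant frame of $\cc{H}_{\bG}$, that the new structural matrices are precisely $O^TA_lO$ and $O^TR_lO$, that $Z_1,\ldots,Z_m$ are untouched, and that the sub-Laplacian is invariant since $\sum_i \wt X_i^2=\sum_i (\wt X_i')^2$ for any orthogonal change of horizontal frame. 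The \emph{moreover} clause is then immediate: each $\lambda_{il}$ is nonzero by the invertibility of $A_l$, and by the very choice of $r$ each $R_l$ that is not identically zero exhibits some $\mu_{jl}\neq 0$ with $j\leqslant r$; an identically vanishing $R_l$ may be dropped without loss since its $Z_l$ then commutes with all $\wt X_i$ and contributes only trivially to the sub-Riemannian structure.

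The only real subtlety is the first step, namely producing a \emph{real} simultaneous orthogonal block-diagonalization rather than merely a complex unitary one. This is classical but worth spelling out carefully: one works inside the joint spectral decomposition of $\{iA_l,iR_l\}$, observes that non-zero joint eigenvalue tuples occur in complex-conjugate pairs, and extracts on each such pair a real orthonormal basis in which every member of the family simultaneously displays a standard $2\times 2$ skew-symmetric block. Once this algebraic reduction is in hand, the rest of the lemma is a matter of book-keeping about orthogonal changes of horizontal frame.
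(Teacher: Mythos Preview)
Your approach is essentially the same as the paper's: both perform a simultaneous orthogonal block-diagonalization of the commuting family of real skew-symmetric matrices $\{A_l,R_l\}$, then carry out the corresponding orthogonal change of horizontal frame and verify that the bracket relations and the sub-Laplacian are preserved. The paper simply cites Horn--Johnson (Theorem~2.5.15) for the simultaneous real canonical form, whereas you sketch its proof via the complex spectral decomposition of $\{iA_l,iR_l\}$ and conjugate-pairing; the paper also invokes \cite{GordinaLaetsch2016} for the invariance of $\sum_i \wt X_i^2$ under orthogonal frame changes, which you assert directly.

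One small caveat on the \emph{moreover} clause: your suggestion that an identically vanishing $R_l$ ``may be dropped'' is not quite right---$Z_l$ remains a basis vector of $\mathfrak g$ regardless. In fact the paper's proof does not address this clause at all, and as stated it cannot hold when some $R_l=0$ (as in the step-two Carnot case of Section~4.3.1, where $r=0$). The clause should be read as asserting only that each nonzero $R_l$ has some $\mu_{jl}\neq 0$; your argument for $\lambda_{il}\neq 0$ via the invertibility of $A_l$ is correct and in fact stronger than what is claimed.
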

\begin{remark}\label{rem:full_rank}
	We note that the linear independence condition in \ref{it:B} equivalent to the full column rank of the matrix $(\lambda_{il})_{1\le i\le n, 1\le l \le m}$.
\end{remark}
\begin{proof}
	Observe that $(A_l, R_l)_{1\leqslant l\leqslant m}$ is a family of normal matrices. As a result, \cite[Theorem~2.5.15]{HornJohnsonBook2013} implies that there exists a $2n\times 2n$ orthogonal matrix $U$ such that for all $1\leqslant l\leqslant m$
	\begin{align*}
		UA_l U^\top&=\begin{pmatrix}0 & -\lambda_{1l} 
			\\
			\lambda_{1l} & 0 \end{pmatrix}\oplus\cdots\oplus \begin{pmatrix} 0 & -\lambda_{nl} 
			\\ 
			\lambda_{nl} & 0 \end{pmatrix} 
		\\
		UR_l U^\top&=\mathbf{0}_{2n-2r}\oplus 
		\begin{pmatrix}0 & \mu_{1l} \\
			-\mu_{1l} & 0 \end{pmatrix}\oplus\cdots\oplus \begin{pmatrix} 0 & \mu_{rl} \\ -\mu_{rl} & 0 \end{pmatrix},
	\end{align*}
where $\mathbf{0}_k$ denotes the zero matrix of dimension $k\times k$.
	Writing $U=(U_{ij})_{1\leqslant i,j\leqslant 2n}$, let us now define 
	\begin{align*}
		Y_i=\sum_{j=1}^{2n} U_{ij}X_j.
	\end{align*}
	After a simple computation, it follows that 
	\begin{align}
		[Y_i, Y_j]=
		\begin{cases}
			\sum_{l=1}^m\lambda_{sl}Z_l & \mbox{if $i=2s-1, j=2s, 1\le s\leqslant n$} 
			\\
			0 & \text{ otherwise.}
		\end{cases}
	\end{align}
	and 
	\begin{align}
		[Y_i, Z_l]=
		\begin{cases}
			0 & \mbox{if $1\leqslant i\leqslant 2r$} 
			\\
			-\mu_{sl} Y_{2s} & \mbox{if $i=2s-1, r<s\leqslant n$} 
			\\
			\mu_{sl} Y_{2s-1} & \mbox{if $i=2s$, $r<s\leqslant n$} 
			\\
			0 & \mbox{otherwise}
		\end{cases}
	\end{align}
	Moreover, $\{\wt{Y}_i\}_{i=1}^{2n}$ is also a left-invariant orthonormal frame in $\bG$. Therefore from \cite[Theorem~3.6]{GordinaLaetsch2016} one gets
	\begin{align*}
		\Delta^{\operatorname{G}}_{\cc H}=\sum_{i=1}^{2n} \wt{X}^2_i=\sum_{i=1}^{2n} \wt{Y}^2_i.
	\end{align*}
	This proves the lemma.
\end{proof}
\begin{proposition}\label{thm:submersion}	Let $\bb{M}$ be defined as in \eqref{eq:M}.
	Then there exists a sub-Riemannian submersion $\Phi: \bb M\to\operatorname{G}$.
\end{proposition}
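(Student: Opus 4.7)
My plan is to construct $\Phi$ as the Lie group homomorphism integrating a carefully chosen Lie algebra homomorphism $\phi:\mathfrak{m}\to\mathfrak{g}$, where $\mathfrak{m}=\mathfrak{m}_1\oplus\cdots\oplus\mathfrak{m}_n$ is the Lie algebra of $\mathbb{M}$. Using the orthonormal diagonalizing frame $\{Y_1,\ldots,Y_{2n},Z_1,\ldots,Z_m\}$ of $\mathfrak{g}$ provided by Lemma~\ref{prop:diagonalization}, I define $\phi$ on basis elements by
\begin{align*}
\phi(X'_{2i-1}) := Y_{2i-1}, \qquad \phi(X'_{2i}) := Y_{2i}, \qquad \phi(Z'_i) := \sum_{l=1}^m \lambda_{il}\, Z_l,
\end{align*}
with $\lambda_{il}$ as in Lemma~\ref{prop:diagonalization}.

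The key point is that $\phi$ respects every bracket. The horizontal-horizontal relation $\phi([X'_{2i-1},X'_{2i}])=[Y_{2i-1},Y_{2i}]$ holds by construction. The horizontal-vertical relations reduce to two identities on the sums $\sum_l \lambda_{il}\mu_{jl}$: the diagonal case $i=j$ must recover the scalar $\rho_i$, which follows from the fact that the $i$-th diagonal $2\times 2$ block of $\Lambda=\sum_l A_l R_l$ in the $(Y_i)$ frame equals $-\bigl(\sum_l \lambda_{il}\mu_{il}\bigr)I_2$ (by a direct $2\times 2$ block multiplication of the skew-symmetric blocks given by Lemma~\ref{prop:diagonalization}), matched against $\rho_i I_2$; the off-diagonal case $i\neq j$ requires $\sum_l \lambda_{il}\mu_{jl}=0$, which is forced by the Jacobi identity in $\mathfrak{g}$ applied to the triple $(Y_{2i-1}, Y_{2i}, Y_{2j-1})$ using the commutation relations displayed in the proof of Lemma~\ref{prop:diagonalization}.

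Once $\phi$ is shown to be a Lie algebra homomorphism, Lie's second theorem applies: since $\mathbb{M}$ is a product of simply connected Lie groups and hence simply connected, $\phi$ lifts uniquely to a Lie group homomorphism $\Phi:\mathbb{M}\to\mathrm{G}$ with $d\Phi_e=\phi$. The three conditions of Definition~\ref{def:submersion} are then checked at the identity and propagated to every point by left-invariance. Surjectivity of $d\Phi_e$, and therefore the submersion property together with surjectivity of $\Phi$ onto the identity component of $\mathrm{G}$, follows because $\{\sum_l \lambda_{il} Z_l\}_{i=1}^n$ spans $\mathrm{span}\{Z_l:1\le l\le m\}$; this is the full column rank of $(\lambda_{il})$, equivalent to the linear independence of $\{A_l\}$ via Remark~\ref{rem:full_rank}. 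The map $\phi$ sends the orthonormal horizontal basis $\{X'_{2i-1},X'_{2i}\}_i$ of $\mathfrak{m}$ to the orthonormal horizontal basis $\{Y_{2i-1},Y_{2i}\}_i$ of $\mathfrak{g}$, yielding the isometric identification of horizontal subspaces; and the inclusion $\mathcal{H}_\mathbb{M}(e)\subset(\ker d\Phi_e)^\perp$ is immediate since $\ker\phi\subset\mathrm{span}\{Z'_i:1\le i\le n\}$ is purely vertical.

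The main technical obstacle is the bracket compatibility of $\phi$ on the horizontal-vertical pairs: the diagonal case needs the eigenvalue computation for $\Lambda$ supplied by Lemma~\ref{prop:diagonalization}, while the off-diagonal case needs the Jacobi identity to produce the orthogonality $\sum_l \lambda_{il}\mu_{jl}=0$ for $i\neq j$. Once these two inputs are in place, the remainder of the proof is a direct unwinding of Definition~\ref{def:submersion} using left-invariance.
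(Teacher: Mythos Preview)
Your proposal is correct and follows essentially the same route as the paper: define $\phi$ on the diagonalized frame of Lemma~\ref{prop:diagonalization}, lift it to a Lie group homomorphism via simple connectedness of $\mathbb{M}$, and read off Definition~\ref{def:submersion} from the fact that $\phi$ carries the orthonormal horizontal basis to the orthonormal horizontal basis (surjectivity coming from Remark~\ref{rem:full_rank}). Your verification is in fact more thorough than the paper's own proof, which checks only the same-index brackets $[X'_{2i-1},X'_{2i}]$ and $[X'_{2i-1},Z'_i]$ and does not explicitly address the off-diagonal vanishing $[\phi(X'_{2i-1}),\phi(Z'_j)]=0$ for $i\neq j$ that you correctly extract from the Jacobi identity.
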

\begin{proof} 
	We note that for each $1\leqslant i\leqslant n$, 
	$\rho_i=\sum_{l=1}^m \lambda_{il} \mu_{il}$ is an eigenvalue of $\Lambda$.
	Writing $\mathfrak{m}_i=\Span\{X^{\prime}_{2i-1}, X^{\prime}_{2i}, Z^{\prime}_i\}$ as the Lie algebra of $\bb{M}_i$, we define the following linear map $\phi:\oplus_{i=1}^n \mathfrak{m}_i\to \mathfrak{g}$ such that
	\begin{align*}
		\phi(X^{\prime}_{2i-1})={X_{2i-1}}, \quad \phi(X^{\prime}_{2i})=X_{2i}, \quad \phi(Z^{\prime}_i)=\sum_{l=1}^m {\lambda_{il}} Z_l.
	\end{align*}
From Remark~\ref{rem:full_rank} it follows that $\phi$ is a surjective linear map.
	We note that for any $1\leqslant i\leqslant n$ and $1\leqslant l\leqslant m$ one gets
	\begin{equation}
		\begin{aligned}
			\phi([X^{\prime}_{2i-1}, X^{\prime}_{2i}])&=\phi(Z^{\prime}_i)\\
			&=\sum_{l=1}^m\lambda_{il}Z_l \\
			&=[X_{2i-1}, X_{2i}] \\
			&=[\phi(X^{\prime}_{2i-1}), \phi(X^{\prime}_{2i})].
		\end{aligned}
	\end{equation}
	Also
	\begin{equation}
		\begin{aligned}
			\phi([X^{\prime}_{2i-1}, Z^{\prime}_i])&=-\rho_i\phi(X^{\prime}_{2i})\\
			&=-\rho_i X_{2i}\\
			&=-\sum_{l=1}^m \lambda_{il}\mu_{il} X_{2i}\\
			&=[\phi(X^{\prime}_{2i-1}), \phi(Z^{\prime}_i)].
		\end{aligned}
	\end{equation}
	This shows that $\phi:\oplus_{i=1}^n\mathfrak{m}_i\to \mathfrak{g}$ is a surjective Lie algebra homomorphism. Since $\bb M$ is also connected, there exists a surjective Lie group homomorphism $\Phi:\bb M\to\operatorname{G}$ such that $\phi=(d\Phi)_e$, $e$ being the identity element of $\bb M$. Moreover, $\phi$ maps an orthonormal basis of $\cc{H}_{\mathbb M}$ to the same of $\cc{H}_\bG$, which proves that $\Phi$ is a sub-Riemannian submersion.
\end{proof}
In the next lemma, we show that the model spaces $\bb{M}(\rho_i)$ can be replaced by $\bb H$, $\operatorname{SU}(2)$ or $\wt{\operatorname{SL}}(2)$ depending on $\rho_i$, where $\wt{\operatorname{SL}}(2)$ is the universal cover of $\operatorname{SL}(2)$. This lemma will be useful to prove \eqref{eq:GB-Tr}. Let $P^{(i)}$ denote the horizontal heat semigroup generated by $\Delta_i=(X^{\prime}_{2i-1})^2+(X^{\prime}_{2i})^2$ on $\bb{M}(\rho_i)$.
\begin{lemma}\label{lem:M(rho)}
	For each $1\leqslant i\leqslant n$, there exists a Lie group isomorphism $\pi_i: \bb{M}(\rho_i)\to H_i$, where 
	\begin{align*}
		H_i=\begin{cases}
			\bsu(2) & \mbox{if $\rho_i> 0$} \\
			\wt{\bsl}(2) & \mbox{if $\rho_i<0$} \\
			\bb{H} & \mbox{if $\rho_i=0$}
		\end{cases}
	\end{align*}
	such that 
	$P^{(i)}_t (f\circ \pi_i)= Q^{(i)}_{t\alpha_i} f \circ \pi_i$ for all $t\geqslant 0$ and $f\in \cC^\infty_c(H_i)$, where $Q^{(i)}$ is the horizontal heat semigroup on $H_i$, and $\alpha_i=\mathbbm{1}_{\{\rho_i=0\}}+|\rho_i|\mathbbm{1}_{\{\rho_i\neq 0\}}$.
\end{lemma}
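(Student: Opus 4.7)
My plan is to construct, for each $i$, a Lie algebra isomorphism $\phi_i:\mathfrak{m}_i\to\mathfrak{h}_i$ (where $\mathfrak{h}_i$ denotes the Lie algebra of $H_i$) that rescales the horizontal generators by $\sqrt{\alpha_i}$ and the vertical one by $\alpha_i$. Since both $\mathbb{M}(\rho_i)$ and $H_i$ are connected and simply connected, the Lie group--Lie algebra correspondence then produces a unique Lie group isomorphism $\pi_i:\mathbb{M}(\rho_i)\to H_i$ with $(d\pi_i)_e=\phi_i$. The semigroup identity will follow from the fact that $\pi_i$ intertwines the sub-Laplacians up to the scalar $\alpha_i$, combined with uniqueness of bounded solutions of the heat equation on $\mathbb{M}(\rho_i)$.

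The construction of $\phi_i$ splits according to the sign of $\rho_i$. When $\rho_i=0$, the defining brackets of $\mathfrak{m}_i$ coincide with those of the three-dimensional Heisenberg algebra, so $\phi_i$ is the identification of bases and $\alpha_i=1$. When $\rho_i>0$, I fix a basis $\{e_1,e_2,e_3\}$ of $\mathfrak{su}(2)$ with $[e_1,e_2]=e_3$, $[e_2,e_3]=e_1$, $[e_3,e_1]=e_2$, and set $\phi_i(X'_{2i-1})=\sqrt{\rho_i}\,e_1$, $\phi_i(X'_{2i})=\sqrt{\rho_i}\,e_2$, $\phi_i(Z'_i)=\rho_i e_3$; a direct verification against the three bracket relations of $\mathfrak{m}_i$ shows that $\phi_i$ is a Lie algebra isomorphism. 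When $\rho_i<0$ one picks a basis $\{f_1,f_2,f_3\}$ of $\mathfrak{sl}(2,\mathbb{R})$ satisfying $[f_1,f_2]=f_3$, $[f_1,f_3]=f_2$, $[f_2,f_3]=-f_1$ and applies the same recipe with $\sqrt{|\rho_i|}$, $|\rho_i|$ and $f_j$ in place of $\sqrt{\rho_i}$, $\rho_i$ and $e_j$. A Killing-form computation on $\mathfrak{m}_i$ giving $\mathrm{tr}\,\mathrm{ad}(X'_j)^2=-2\rho_i$ and $\mathrm{tr}\,\mathrm{ad}(Z'_i)^2=-2\rho_i^2$ shows that $\mathfrak{h}_i$ must be one of $\mathfrak{su}(2)$, $\mathfrak{sl}(2,\mathbb{R})$, or the Heisenberg algebra, and simple connectedness of $\mathbb{M}(\rho_i)$ then selects $H_i$ as $\mathrm{SU}(2)$, $\widetilde{\mathrm{SL}}(2)$, or $\mathbb{H}$ respectively.

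With $\pi_i$ in hand, the chain rule applied to the curve $t\mapsto\pi_i(x\exp(tV))=\pi_i(x)\exp(t\phi_i(V))$ yields $\widetilde{V}(f\circ\pi_i)=(\widetilde{\phi_i(V)}\,f)\circ\pi_i$ for every $V\in\mathfrak{m}_i$, where the tildes denote left-invariant extensions. Specializing to $V=X'_{2i-1}$ and $V=X'_{2i}$ and iterating once more gives $\Delta_i(f\circ\pi_i)=\alpha_i(\Delta_{H_i}f)\circ\pi_i$, where $\Delta_{H_i}$ is the sub-Laplacian of $H_i$. Consequently both $u(t,x):=P^{(i)}_t(f\circ\pi_i)(x)$ and $v(t,x):=(Q^{(i)}_{t\alpha_i}f)(\pi_i(x))$ satisfy $\partial_t w=\Delta_i w$ with the common initial datum $f\circ\pi_i$, so uniqueness of bounded solutions of the heat equation on $\mathbb{M}(\rho_i)$ forces $u=v$, which is exactly the claimed identity. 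The main obstacle I anticipate is the sign bookkeeping in the $\rho_i<0$ case, where one has to verify that the brackets of $\mathfrak{m}(\rho_i)$ really reproduce $\mathfrak{sl}(2,\mathbb{R})$ rather than $\mathfrak{su}(2)$ and pick the basis $\{f_1,f_2,f_3\}$ with compatible signs; every other step is either a routine bracket verification or a standard appeal to Lie theory and heat-equation uniqueness.
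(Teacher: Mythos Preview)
Your proof is correct and follows essentially the same route as the paper: both arguments rescale the horizontal generators by $\sqrt{|\rho_i|}$ and the vertical one by $|\rho_i|$ to produce a Lie algebra isomorphism onto $\mathfrak{su}(2)$, $\mathfrak{sl}(2,\mathbb{R})$, or the Heisenberg algebra, and then lift to the simply connected groups. The only cosmetic differences are that the paper absorbs the sign in the $\rho_i<0$ case via a $\mathrm{sign}(\rho_i)$ factor on one horizontal generator rather than by choosing a signed $\mathfrak{sl}(2)$ basis, and that for the semigroup identity the paper appeals to the intertwining formula established in the proof of Theorem~\ref{thm:projection} instead of your direct heat-equation uniqueness argument; your Killing-form computation is a harmless extra, since the trichotomy is already forced by the explicit bracket check.
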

\begin{proof}
	When $\rho_i=0$, it is easy to see that $\mathfrak{m}_i$ is isometrically isomorphic to the Lie algebra of $\bb H$. 
	For $\rho_i\neq 0$, let us define $Y_{2i-1}=\frac{X^{\prime}_{2i-1}}{\sqrt{|\rho_i|}}, Y_{2i}=\sign(\rho_i)\frac{X^{\prime}_{2i}}{\sqrt{|\rho_i|}}, W_i=\frac{Z^{\prime}_i}{\rho_i}$. Then one has
	\begin{align}
		[Y_{2i-1}, Y_{2i}]=W_i, \ [Y_{2i-1}, W_i]=-\sign(\rho_i) Y_{2i}, \ [Y_{2i}, W_i]=\sign(\rho_i) Y_{2i-1}.
	\end{align}
	Considering $\left\{\frac{X^{\prime}_{2i-1}}{\sqrt{|\rho_i|}},\sign(\rho_i)\frac{X^{\prime}_{2i}}{\sqrt{|\rho_i|}}\right\}$ as an orthonormal basis of $\mathfrak{su}(2)$ or $\wt{\mathfrak{sl}}(2)$, the lemma follows from the proof of Theorem~\ref{thm:projection}.
\end{proof}

\begin{proof}[Proof of Theorem~\ref{thm:transverse}]
	
	We first show \eqref{eq:RP-Tr} and \eqref{eq:RLS-Tr}. By Theorem~\ref{thm:tensorization}, Theorem~\ref{thm:projection} and Theorem~\ref{thm:submersion}, it suffices to show the validity of \eqref{eq:RP}, \eqref{eq:RLS} for each $\bb{M}(\rho_i)$, which follows from the generalized curvature-dimension inequality proved in \cite[Proposition~2.1]{BaudoinGarofalo2017} in conjunction with \cite[Proposition 3.1, 3.2]{BaudoinBonnefont2012}. For \eqref{eq:li_yau}, using \cite[Remark 6.2]{BaudoinGarofalo2017}, we note that for any $i$, $\bb{M}(\rho_i)$ satisfies $\operatorname{CD}(\rho,\frac{1}{2}, \frac{1}{2})$. As a result, \eqref{eq:li_yau} follows from \cite[Theorem~6.1, Eq. (6.1)]{BaudoinGarofalo2017} and Theorem~\ref{thm:tensorization}, Theorem~\ref{thm:projection}. To prove \eqref{it:thm_G_2a}, we note that the nonnegativity of $\Lambda$ enforces that $\rho_i\geqslant 0$ for each $i=1,\ldots, d$. As a result, Lemma~\ref{lem:M(rho)} implies that $\bb{M}(\rho_i)\cong\bb{H} \text{ or} \ \operatorname{SU}(2)$ according to $\rho_i=0$ or $\rho_i>0$. Using Driver-Melcher inequality \cite{DriverMelcher2005} or H.~Q.~Li inequality \cite{LiHong-Quan2006} for $\rho_i=0$ and Lemma~\ref{lem:M(rho)} together with \cite[Theorem 4.10]{BaudoinBonnefont2009} for $\rho_i>0$, it follows that for all $\rho_i\geqslant 0$, $p>1$ and $f\in \cC^\infty_c(\bb{M}(\rho_i))$

	\begin{align}
		\|\nabla^{\bb{M}(\rho_i)}_{\cc H} P^{\bb{M}(\rho_i)}_t f\|^p_{\cc H}\leqslant C^{(i)}_pe^{-\rho_i t} P^{\bb{M}(\rho_i)}_t\|\nabla^{\bb{M}(\rho_i)}_{\cc H} f\|^p_{\cc H}.
	\end{align}
	Therefore \eqref{eq:GB-Tr} follows from Theorem~\ref{thm:tensorization} and Theorem~\ref{thm:projection} as well. For \eqref{eq:GB-Tr2}, due to \cite[Lemma~2.10]{BaudoinGarofalo2017}, we have  $[\Delta^{{\rm G}}_{\cc H}, Z_l]=0$ for any $1\leqslant l\leqslant m$. Therefore, for any $t\geqslant 0$, $\nabla_{\cc V}P^{\operatorname{G}}_t=P^{\operatorname{G}}_t\nabla_{\cc V}$, which entails that for all $f\in \cC^\infty_c(\bG)$, 
	\begin{align*}
		\|\nabla_{\cc V} P^{\operatorname{G}}_t f\|^2\leqslant P^{\operatorname{G}}_t\|\nabla_{\cc V} f\|^2,
	\end{align*}
	proving \eqref{eq:GB-Tr2}. To prove \eqref{it:thm_G_2b}, we again resort to \cite[Theorem~7.1]{BaudoinGarofalo2017} to argue that for any $1\leqslant i\leqslant n$, $0<s<t$ and nonnegative $f\in\cC_b(\bb{M}(\rho_i))$,
	\begin{align*}
		P^{(i)}_s f(x_i)\leqslant P^{(i)}_t f(y_i) \left(\frac{t}{s}\right)^{8}\exp\left(\frac{4 d^2_i(x_i, y_i)}{t-s}\right), \quad \forall x_i, y_i\in\bb{M}(\rho_i),
	\end{align*}
	where $d_i$ denotes the Carnot-Carath\'eodory metric on $\bb{M}(\rho_i)$.
	As a result, \eqref{eq:PHI} follows from an argument very similar to the proof of Wang-Harnack inequality in Theorem~\ref{thm:main} accompanied with Lemma~\ref{lem:metrics} and Theorem~\ref{thm:submersion}.
\end{proof}

\begin{proof}[Proof of Proposition~\ref{prop:optimal_constants}]
	
	As noted in Lemma~\ref{prop:diagonalization}, without loss of generality we can assume that
	\begin{align*}
		A_l=\begin{pmatrix}0 & -\lambda_{1l} \\
			\lambda_{1l} & 0 \end{pmatrix}\oplus\cdots\oplus \begin{pmatrix} 0 & -\lambda_{nl} \\ \lambda_{nl} & 0 \end{pmatrix}.
	\end{align*}
	Therefore, $\kappa$ and $\gamma$ becomes, respectively,
	\begin{align*}		\kappa=\sup_{\|\bx\|=1}\sum_{j=1}^n\sum_{l=1}^m\lambda_{jl}^2(x^2_{2j-1}+x^2_{2j})=\max\left\{1\leqslant j\leqslant n: \sum_{l=1}^m \lambda^2_{jl}\right\},
	\end{align*}
	\begin{align*}
		\gamma=\frac{1}{2}\inf_{\|\bz\|=1}\sum_{i=1}^n \sum_{l=1}^m\lambda_{il}^2z^2_l=\min\left\{1\leqslant l\leqslant m: \sum_{i=1}^n \lambda^2_{il}\right\}.
	\end{align*}
	When $m=n$, the above computation implies that $\kappa\geqslant 2\gamma$, which proves the first statement of the proposition.
	
	Let us now choose $\lambda_{il}=i^{l-1}$, $1\leqslant i\leqslant n, 1\leqslant l\leqslant m$. Note that this choice of $\lambda_{il}$ ensures that $A_1,\ldots, A_m$ are linearly independent. Now, with this choice of $(A_l)_{1\leqslant l\leqslant m}$, we have
	\begin{align*}
		& \kappa=\max\left\{1\leqslant j\leqslant n: \sum_{l=1}^m j^{l-1}\right\}=\frac{n^{m}-1}{n-1}, 
		\\
		& \gamma=\frac{1}{2}\min\left\{1\leqslant l\leqslant m: \sum_{i=1}^n i^{l-1}\right\}=\frac{n}{2}.
	\end{align*}
	As a result, $\kappa/\gamma=\frac{2(n^m-1)}{n(n-1)}$, which completes the proof of the proposition. 
\end{proof}

The next example is a special case where we assume $R_l=\mathbf{0}_{2n}$ for all $1\le l\le m$.
\subsubsection{Step $2$ homogeneous Carnot group}
Consider the following step $2$ homogeneous Carnot group $\GG_{n,m}=\bb{R}^{2n}\times \bb{R}^m$ with the group operation
\begin{align}
	(\bx, \by)\star (\bx',\by')=(\bx+\bx', y_1+\langle  A_1\bx, \bx'\rangle,\ldots, y_m+\langle A_m\bx, \bx'\rangle),
\end{align}
where $\bx\in\bb{R}^{2n}$, $\by\in\bb{R}^m$, and $(A_l)_{1\leqslant l\leqslant m}$ is a family of commuting, linearly independent skew-symmetric real matrices. The linear independence of $(A_l)_{1\leqslant l\leqslant m}$ implies that the Lie algebra of $\GG_{n,m}$ is generated by the left invariant vector fields $\{X_1, \ldots, X_{2n}\}$ where 
\begin{align}\label{eq:vf-H}
	X_i=\frac{\partial}{\partial x_i}-\frac{1}{2}\sum_{l=1}^m \langle A_l\bx, \mathbf{e}_i\rangle\frac{\partial}{\partial y_l}.
\end{align}
After some simple computations, it follows that for any $1\leqslant i\neq j\leqslant 2n$, 
\begin{align*}
	[X_i, X_j]=\sum_{l=1}^n A_{l; i,j}\frac{\partial}{\partial y_l},
\end{align*}
where $A_{l; i,j}$ denotes the $(i,j)^{th}$ entry of $A_l$. Also, $[X_i, \frac{\partial}{\partial y_l}]=0$ for all $1\leqslant i\leqslant 2n$ and $1\leqslant l\leqslant m$.
In particular, when $m=1$ and 
\begin{align*}
A_1=\begin{pmatrix}
	S & & \\
	 &  \ddots & \\
	& & S
\end{pmatrix}, \quad  S=\begin{pmatrix}0 & 1 \\ -1 & 0 \end{pmatrix},
\end{align*}
then 
 $\bb{G}_{n,1}=\bb{H}_{2n+1}$, the Heisenberg group of dimension $2n+1$.

\begin{proposition}\label{prop:Heisenberg}
	For any $p\geqslant 1$, There exists a positive constant $C$ such that for all $t>0$ and $f\in \cC^\infty_c(\bb{G}_{n,m})$
	\begin{align}
		\|\nabla_{\cc H} P^{\bb G_{n,m}}_t f\|_{\cc{H}}&\leqslant C P^{\bb G_{n,m}}_t\|\nabla_{\cc H} f\|_{\cc H}  \label{eq:GB-H}  \\
		\|\nabla_{\cc H} P^{\bb G_{n,m}}_t f\|^2_{\cc H}&\leqslant \frac{1}{t}(P^{\bb G_{n,m}}_t f^2-(P^{\bb G_{n,m}}_t f)^2), \label{eq:RP-H} 
	\end{align} and for all $f\in \cC^\infty_c(\bb{G}_{n,m})$ with $f>0$,
	\begin{align}
		P^{\bb G_{n,m}}_t f \|\nabla_{\cc H} P^{\bb G_{n,m}}_t f\|^2_{\cc H}&\leqslant \frac{5}{t} (P^{\bb G_{n,m}}_t (f\ln f)- P^{\bb G_{n,m}}_t f\ln (P^{\bb G_{n,m}}_t f)) \label{eq:RLS-H} \\
		\|\nabla_{\cc H} \ln P^{\bb G_{n,m}}_t f\|^2_{\cc H}& \leqslant 4\frac{\Delta_{\cc H}P^{\bb G_{n,m}}_t f}{P^{\bb G_{n,m}}_t f}+\frac{16n}{t}\label{eq:LY-H}
	\end{align}
\end{proposition}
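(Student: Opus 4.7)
The strategy is to recognize that $\bb{G}_{n,m}$ is exactly the subcase of the transverse-symmetry Lie group of Section~\ref{sec:3} in which $R_l = \mathbf{0}_{2n}$ for every $l$. Hence $\Lambda = \sum_{l=1}^m A_l R_l = 0$, so $\rho = 0$ and $\Lambda$ is (trivially) non-negative definite, which places us inside the regime where both parts of Theorem~\ref{thm:transverse} apply. Moreover, after the decoupling of Lemma~\ref{prop:diagonalization} all spectral parameters $\rho_i$ vanish, so by Lemma~\ref{lem:M(rho)} each model factor $\bb{M}(\rho_i)$ is isomorphic to the 3-dimensional Heisenberg group $\bb{H}$, and Proposition~\ref{thm:submersion} furnishes a sub-Riemannian submersion $\Phi:\bb{H}^n \to \bb{G}_{n,m}$.

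With $\rho = \rho^- = 0$, the inequalities \eqref{eq:RLS-H} and \eqref{eq:LY-H} are read off directly from \eqref{eq:RLS-Tr} and \eqref{eq:li_yau} in Theorem~\ref{thm:transverse}: the constants $5/t$ and the pair $(4,\,16n/t)$ match on the nose. The gradient bound \eqref{eq:GB-H} follows from Theorem~\ref{thm:transverse}\eqref{it:thm_G_2a}, specializing \eqref{eq:GB-Tr} to $\rho = 0$ and $p = 1$; the constant is independent of $n$ because at the factor level it descends from the Driver–Melcher / H.~Q.~Li gradient inequality on $\bb{H}$, and is then transported to $\bb{H}^n$ by Theorem~\ref{thm:tensorization} and onward to $\bb{G}_{n,m}$ by Theorem~\ref{thm:projection}. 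Jensen's inequality upgrades this from $p=1$ to any $p \geqslant 1$.

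For \eqref{eq:RP-H}, the target constant $1/t$ is slightly sharper than the $5/(2t)$ one would get by plugging $\rho=0$ into \eqref{eq:RP-Tr}. I would therefore bypass the generic route and invoke instead the sharp reverse Poincaré inequality
\[
\|\nabla_{\cc H} P^{\bb H}_t f\|^2_{\cc H} \leqslant \tfrac{1}{t}\bigl(P^{\bb H}_t f^2 - (P^{\bb H}_t f)^2\bigr)
\]
for the 3-dimensional Heisenberg group, which is available either from an explicit heat-kernel computation or from the optimal generalized curvature-dimension estimate on $\bb{H}$ in the Baudoin–Bonnefont program. Having this sharp bound on each factor, Theorem~\ref{thm:tensorization} propagates the same constant to $\bb{H}^n$, and Theorem~\ref{thm:projection} pushes it forward along $\Phi$ to $\bb{G}_{n,m}$, yielding \eqref{eq:RP-H}.

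The main obstacle is precisely the extraction of the sharp $1/t$ constant on the single factor $\bb{H}$, since the generic curvature-dimension input used in Theorem~\ref{thm:transverse} is lossy. Everything else is bookkeeping: once the four inequalities are established for $\bb{H}$ (with the indicated constants), the tensorization result Theorem~\ref{thm:tensorization} and the submersion result Theorem~\ref{thm:projection}, combined with Proposition~\ref{thm:submersion}, deliver the dimension-independent analogues for $\bb{G}_{n,m}$ with the very same constants, which completes the proof.
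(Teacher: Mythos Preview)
Your proposal is correct and follows essentially the same route as the paper: observe $\Lambda=0$ so that Theorem~\ref{thm:transverse} yields \eqref{eq:GB-H}, \eqref{eq:RLS-H}, \eqref{eq:LY-H} directly, and for the sharper constant in \eqref{eq:RP-H} invoke the optimal reverse Poincar\'e inequality on the single factor $\bb{H}$ (the paper cites \cite[Corollary~2.7]{BaudoinBonnefont2016}, giving $C(t)=1/t$ when $n=1$), then push it to $\bb{H}^n$ and $\bb{G}_{n,m}$ via Theorems~\ref{thm:tensorization} and~\ref{thm:projection} together with Proposition~\ref{thm:submersion}.
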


\begin{remark}
We note that \eqref{eq:RP-H} for such groups has already been studied in \cite[Theorem~2.1]{WangFY2016a} using a derivative type formula for a class of diffusion Markov semigroups. However, the constants obtained there grow with the dimension of the group while our results show that the constants can be chosen independent of the dimension. This can be used to extend our results to  infinite dimensions. 
\end{remark}

\begin{remark}
When $\bb{G}_{n,m}=\bb{H}_{2n+1}$, the Heisenberg group of dimension $2n+1$, Baudoin and Bonnefont \cite[Corollary 2.7]{BaudoinBonnefont2016} showed that \eqref{eq:RP} holds with $C(t)=\frac{n+1}{2nt}$. Clearly, this estimate is sharper than \eqref{eq:RP-H} when $n\geqslant 2$. In general, for any two-step Carnot group, the optimal choice for $C(t)$ is bounded above by $\frac{2n+2m}{2}=n+m$, see \cite[Proposition~2.6]{BaudoinBonnefont2016}, which grows with the dimension of the group.
\end{remark}

\begin{remark}
 When $m=1$, $\bb{G}_{n,m}$ is a non-isotropic Heisenberg group, for which the gradient bound \eqref{eq:GB-H} has been obtained in \cite{Zhang2023, LiHong-QuanZhang2019}. See also \cite{GordinaLuo2022, GordinaLuo2024} for logarithmic Sobolev  inequalities on the non-isotropic Heisenberg groups.
\end{remark}

\begin{proof}[Proof of Proposition~\ref{prop:Heisenberg}]
	In terms of the notation in Theorem~\ref{thm:transverse}, we note that $\Lambda=0$. Therefore, \eqref{eq:GB-H}, \eqref{eq:RLS-H} and \eqref{eq:LY-H} follows from Theorem~\ref{thm:transverse}. To prove \eqref{eq:RP-H}, we note that Theorem~\ref{thm:submersion} implies that there exists a sub-Riemannian submersion from $\bb{H}^n$ to $\bb{G}_{n,m}$. Also, the optimal reverse Poincar\'e inequality for the Heisenberg group $\bb H$ has been proved in \cite[Corollary~2.7]{BaudoinBonnefont2016}. As a result, \eqref{eq:RP-H} is a direct consequence of Theorem~\ref{thm:tensorization} and Theorem~\ref{thm:projection}. 
\end{proof}

\subsection{Hypoelliptic heat equation on $\operatorname{SO}(3)$}\label{s.4.3} The Lie group $\operatorname{SO}(3)$ is the group of $3\times 3$ real orthogonal matrices of determinant $1$. A basis of the Lie algebra $\mathfrak{so}(3)$ is $\{X_1,X_2,Z\}$, where
\begin{align*}
	X_1=\begin{pmatrix}
		0 & 1 & 0\\
		-1 & 0 & 0\\
		0 & 0 & 0
	\end{pmatrix}, \ \ 
	X_2=\begin{pmatrix}
		0 & 0 & 1\\
		0 & 0 & 0\\
		-1 & 0 & 0
	\end{pmatrix}, \ \ 
	Z=\begin{pmatrix}
		0 & 0 & 0\\
		0  & 0 & 1\\
		0 & -1 & 0
	\end{pmatrix}.
\end{align*}
They satisfy the following commutation rules
\begin{equation}\label{brackets} 
	[X_1,X_2]=-Z, \quad [X_1,Z]= X_2, \quad [X_2,Z]=-X_1.
\end{equation}
Let $\wt X_1,\wt X_2$ denote the left-invariant vector fields corresponding to $X_1,X_2$. Then, $\wt{X}_1,\wt{X}_2$ satisfy the H\"{o}rmander's condition and therefore $\operatorname{SO}(3)$ is a sub-Riemannian manifold. We consider $\operatorname{SO}(3)$ equipped with the Haar measure and the sub-Laplacian is defined by 
\begin{align*}
	\Delta^{\operatorname{SO}(3)}_{\cc H}=\wt{X}^2_1+\wt{X}^2_2.
\end{align*}
Using the notations from Section~\ref{sec:3}, we note that $A_1=R_1=\begin{pmatrix} 0 & -1 \\ 1 & 0 \end{pmatrix}$. As a consequence, we have the following result.
\begin{corollary}
	The horizontal heat semigroup $P^{\operatorname{SO}(3)}_t$ generated by $\Delta^{\operatorname{SO}(3)}_\cc{H}$ satisfies \eqref{eq:RP-Tr}, \eqref{eq:RLS-Tr}, \eqref{eq:li_yau} and \eqref{eq:GB-Tr} with $\rho=1$.
\end{corollary}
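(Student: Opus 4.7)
The plan is to recognize $\mathrm{SO}(3)$ as an instance of the Lie group with transverse symmetry studied in Section~\ref{sec:3}, with $n=m=1$, and then invoke Theorem~\ref{thm:transverse} directly. The work reduces to (i) extracting the structure matrices $A_1,R_1$ from the brackets \eqref{brackets}, (ii) verifying conditions (A)--(C) of Section~\ref{sec:3}, and (iii) computing $\Lambda$ and its spectrum.

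First I would read off $A_1$ and $R_1$ from \eqref{brackets}. The relation $[X_1,X_2]=-Z$ gives the coefficient $A_{1;1,2}=-1$, hence by skew-symmetry $A_1=\begin{pmatrix} 0 & -1 \\ 1 & 0\end{pmatrix}$. The relations $[X_1,Z]=X_2,\ [X_2,Z]=-X_1$ give $R_{1;1,2}=1$ and $R_{1;2,1}=-1$, hence $R_1=\begin{pmatrix} 0 & 1\\ -1 & 0\end{pmatrix}$. Both matrices are $2\times 2$ skew-symmetric. Condition (B) (linear independence and full rank of the $A_l$) is immediate with only one nonzero $A_1$; condition (C) (commutativity among the $A_l$ and $R_l$) is vacuous since $m=1$. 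Thus $\mathrm{SO}(3)$ satisfies the hypotheses of the framework, and by \cite[Theorem~2.5.15]{HornJohnsonBook2013} or Lemma~\ref{prop:diagonalization}, we can also check directly that this is already in the reduced normal form of Lemma~\ref{prop:diagonalization} with $r=n=1$, $\lambda_{11}=-1$, $\mu_{11}=1$.

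Next I compute $\Lambda=\sum_{l=1}^m A_l R_l=A_1R_1$. A direct multiplication yields $\Lambda=I_2$, so the unique eigenvalue of $\Lambda$ is $\rho=1$, and in particular $\Lambda$ is positive definite (hence non-negative definite) and $\rho^-=\max\{0,-\rho\}=0$. With these two facts in hand, the conclusion is immediate: the first part of Theorem~\ref{thm:transverse} yields \eqref{eq:RP-Tr}, \eqref{eq:RLS-Tr} and \eqref{eq:li_yau} with $\rho=1$ and $\rho^-=0$, while the nonnegativity of $\Lambda$ activates part \eqref{it:thm_G_2a} of Theorem~\ref{thm:transverse}, giving \eqref{eq:GB-Tr} with the exponential factor $e^{-\rho t}=e^{-t}$.

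There is no real obstacle here beyond correctly bookkeeping the sign conventions in the matrices $A_1,R_1$; the only subtle point is to confirm that $\Lambda=I_2>0$, since that is what unlocks \eqref{eq:GB-Tr}. Everything else is a direct invocation of the already-proved Theorem~\ref{thm:transverse} applied to the identification $(n,m,\Lambda)=(1,1,I_2)$. (Geometrically this is also consistent with the fact that $\mathrm{SO}(3)$ is a quotient of $\mathrm{SU}(2)=\mathbb{M}(1)$, so the sub-Riemannian submersion of Proposition~\ref{thm:submersion} collapses in this example to the usual double cover $\mathrm{SU}(2)\to\mathrm{SO}(3)$.)
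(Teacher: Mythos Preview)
Your proposal is correct and follows essentially the same approach as the paper: identify $\mathrm{SO}(3)$ as an instance of the Section~\ref{sec:3} framework with $n=m=1$, read off $A_1,R_1$ from \eqref{brackets}, compute $\Lambda=A_1R_1=I_2$ so that $\rho=1$ and $\Lambda\geqslant 0$, and invoke Theorem~\ref{thm:transverse}. Your sign for $R_1$ differs from the paper's stated $A_1=R_1$, but yours is the one consistent with \eqref{brackets} and in any case leads to the correct $\Lambda=I_2$.
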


\subsection{Hypoelliptic heat equation on $\operatorname{SO}(4)$}
The Lie algebra of $\operatorname{SO}(4)$ is given by
\begin{align*}
	\mathfrak{so}(4)=\{A\in \mathfrak{gl}_4(\bb R): \ A+A^\top=0\},
\end{align*}
endowed with the inner product $\langle A,B\rangle_{\mathfrak{so}(4)}=\trace(AB^\top)$. Also, $\operatorname{SO}(4)$ is equipped with the Haar measure. Let $E_{i,j}\in\mathfrak{gl}_4(\bb R)$ denote the matrix whose $(i,j)^{th}$ entry equals $1$ and the rest of the entries are equal to $0$. We consider 
\begin{align*}
X_j=E_{j+1,1}-E_{1,j+1}, \quad 1 \leqslant j \leqslant 3,
\end{align*}
and let us write 
\begin{align*}
Z_1=[X_2,X_3], \ Z_2=[X_3,X_1], \ Z_3=[X_1,X_2].
\end{align*}
Then, a simple computation shows that 
\begin{equation}
\begin{aligned}
&[Z_1, Z_2]=Z_3, \quad [Z_2, Z_3]=Z_1, \quad [Z_3, Z_1]=Z_2, \\
&[Z_i, X_j]=\epsilon_{ijk} X_k,
\end{aligned}
\end{equation}
where 
\begin{align*}
\epsilon_{ijk}=\begin{cases} 
	0 & \text{ if } i=j  
\\
	1 & \text{ if } (i,j,k) \text{ is an even permutation of }  (1,2,3)
\\
	-1 & \text{ otherwise }
\end{cases}
\end{align*}
 Moreover, $\{X_1, X_2, X_3, Z_1, Z_2, Z_3\}$ is an orthonormal basis for $\mathfrak{so}(4)$. Writing the corresponding left-invariant vector fields by $\wt{X}_i, \wt{Z}_i$ for $i=1,2,3$, the Laplace-Beltrami operator on $\operatorname{SO}(4)$ is given by 
 \begin{align*}
 	\Delta^{\operatorname{SO}(4)}=\wt{X}^2_1+\wt{X}^2_2+\wt{X}^2_3+\wt{Z}^2_1+\wt{Z}^2_2+\wt{Z}^2_3.
 \end{align*}
Let us consider the horizontal distribution $\cc{H}_{\operatorname{SO}(4)}$ on $\operatorname{SO}(4)$ equipped with the inner product 
\begin{align*}
	g_x(\wt{X},\wt{Y})=\trace(XY^\top),
\end{align*}
and generated by the orthonormal frame $\{\wt{X}_1, \wt{X}_2, \wt{Z}_1, \wt{Z}_2\}$. In this case, the sub-Laplacian is given by 
\begin{align*}
\Delta^{\operatorname{SO}(4)}_{\cc H}=\wt{X}^2_1+\wt{X}^2_2+\wt{Z}^2_1+\wt{Z}^2_2
\end{align*}
With a relabeling of the vector fields $X_1, X_2, Z_1, Z_2, X_3, Z_3$ by $X'_1$, $X'_2$, $X'_3$, $X'_4$, $Z'_1$, $Z'_2$ respectively, we note that 
\begin{align*}
	&[X'_1, X'_2]=Z'_2, \quad [X'_1, X'_3]=0, \quad [X'_1, X'_4]=Z'_1 \\
	&[X'_2, X'_3]=-Z'_1, \quad [X'_2, X'_4]=0, \quad [X'_3, X'_4]=Z'_2,
\end{align*}
and 
\begin{align*}
	&[X'_1, Z'_1]=-X'_4, \quad [X'_2, Z'_1]=X'_3, \quad [X'_3, Z'_1]=-X'_2, \quad [X'_4, Z'_1]=X'_1 \\
	&[X'_1, Z'_2]=-X'_2, \quad [X'_2, Z'_2]=X'_1, \quad [X'_3, Z'_2]=-X'_4, \quad [X'_4, Z'_2]=X'_3.
\end{align*}
From the above computation it follows that $\operatorname{SO}(4)$ is a Lie group with transverse symmetry as defined in Subsection~\ref{sec:3}. In this case, the matrices $\{A_l,R_l\}_{l=1}^2$ are given by 
\begin{align*}
	&A_1=\begin{pmatrix}
		0 & 0 & 0 & 1 \\
		0 & 0 & -1 & 0 \\
		0 & 1 & 0 & 0 \\
		-1 & 0 & 0 & 0
	\end{pmatrix} , \quad
	A_2=\begin{pmatrix}
		0 & 1 & 0 & 0 \\
		-1 & 0 & 0 & 0 \\
		0 & 0 & 0 & 1 \\
		0 & 0 & -1 & 0
	\end{pmatrix}, \quad 
\\
&R_1=\begin{pmatrix}
	0 & 0 & 0 & -1 \\
	0 & 0 & 1 & 0 \\
	0 & -1 & 0 & 0 \\
	1 & 0 & 0 & 0
\end{pmatrix}, \quad
R_2=\begin{pmatrix}
	0 & -1 & 0 & 0 \\
	1 & 0 & 0 & 0 \\
	0 & 0 & 0 & -1 \\
	0 & 0 & 1 & 0
\end{pmatrix}.
\end{align*}
Also, it can be easily verified that $A_1, A_2, R_1, R_2$ commute with each other.
Let $\left(P^{\operatorname{SO}(4)}_t\right)_{t\geqslant 0}$ denote the horizontal heat semigroup on $\operatorname{SO}(4)$ generated by $\Delta^{\operatorname{SO}(4)}_{\cc H}$. 

\begin{corollary}
The horizontal heat semigroup $P^{\operatorname{SO}(4)}_t$ satisfies \eqref{eq:RP-Tr}, \eqref{eq:RLS-Tr}, \eqref{eq:li_yau} and \eqref{eq:GB-Tr} with $\rho=2$.
\end{corollary}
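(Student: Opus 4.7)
The strategy is to climb up the tower built by the preceding proposition: establish the four inequalities on the single model factor $\mathbb{M}(\sqrt 2)$, tensorize to $\mathbb{M}(\sqrt 2)\times \mathbb{M}(\sqrt 2)$, and then push down to $\mathrm{SO}(4)$ through the sub-Riemannian submersion $\Phi$ that was just constructed.

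First I would apply Theorem~\ref{thm:transverse} to the three-dimensional Lie group $\mathbb{M}(\sqrt 2)$ itself. In this case the parameters of Section~\ref{sec:3} reduce to $n=m=1$, and the matrix $\Lambda$ is the scalar $\sqrt 2$, so its minimum eigenvalue is $\rho=\sqrt 2>0$. In particular $\Lambda$ is non-negative definite, which unlocks the gradient bound \eqref{eq:GB-Tr} in addition to \eqref{eq:RP-Tr}, \eqref{eq:RLS-Tr}, and \eqref{eq:li_yau} on $\mathbb{M}(\sqrt 2)$, all with the value $\rho=\sqrt 2$.

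Next I would tensorize via Theorem~\ref{thm:tensorization}. Because both factors are identical copies of $\mathbb{M}(\sqrt 2)$, the rules $C(t)=\max\{C_i(t)\}$ for \eqref{eq:RP-Tr}, \eqref{eq:RLS-Tr}, \eqref{eq:GB-Tr} preserve all constants verbatim, and for \eqref{eq:li_yau} the leading factor $\min\{C^{(i)}_1(t)\}$ is unchanged while $\sum_i C_2^{(i)}(t)$ correctly accumulates the $n$-dependent part, matching the form of \eqref{eq:li_yau} for the product (which has $n=2$ horizontal pairs). Thus the four inequalities hold on $\mathbb{M}(\sqrt 2)\times \mathbb{M}(\sqrt 2)$ with $\rho=\sqrt 2$.

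Finally, the preceding proposition furnishes a sub-Riemannian submersion $\Phi: \mathbb{M}(\sqrt 2)\times \mathbb{M}(\sqrt 2)\to \mathrm{SO}(4)$, together with the intertwining relation \eqref{eq:heat_so(4)}. Theorem~\ref{thm:projection} then transfers \eqref{eq:RP-Tr}, \eqref{eq:RLS-Tr}, \eqref{eq:li_yau}, and \eqref{eq:GB-Tr} from the total space to $\mathrm{SO}(4)$ with identical constants, which completes the proof. There is no real obstacle here; the only point that requires care is verifying that the $n$ appearing in \eqref{eq:li_yau} matches on both sides (namely $n=2$, corresponding to the four horizontal vector fields $\wt X_1, \wt X_2, \wt Z_1, \wt Z_2$ on $\mathrm{SO}(4)$ and the two rank-one factors on the product), which is precisely what the tensorization bookkeeping delivers.
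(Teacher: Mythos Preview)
Your argument is correct and matches the paper's own reasoning: the corollary is stated there simply as a direct consequence of Theorem~\ref{thm:tensorization} and Theorem~\ref{thm:projection}, applied to the submersion $\Phi:\mathbb{M}(\sqrt 2)\times\mathbb{M}(\sqrt 2)\to\mathrm{SO}(4)$ produced in the preceding proposition. Your additional step of invoking Theorem~\ref{thm:transverse} on the single factor $\mathbb{M}(\sqrt 2)$ (where indeed $\Lambda=\sqrt 2\,I_2$, so $\rho=\sqrt 2>0$) is the natural way to feed the machine, and your bookkeeping for the $n$-dependence in \eqref{eq:li_yau} is on point.
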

\begin{proof}
	For $\operatorname{SO}(4)$, the matrix $\Lambda$ defined in \eqref{eq:Lambda} is given by $\Lambda=A_1R_1+A_2R_2=2I_{4\times 4}$, where $I_{4\times 4}$ is the $4\times 4$ identity matrix. Therefore, $\rho=2$, which proves the result.
\end{proof}

\subsection{Compact Heisenberg nil-manifolds}
	Let $G=\bb{H}$, the $3$--dimensional Heisenberg group and $H=G\cap \bb{Z}^3$. Then, $M:=H\backslash G$ is a compact \emph{Heisenberg nil-manifold}. We refer to \cite{Folland2004a, glutsyuk2024} for more details about the definition. Moreover, by \cite{Vinberg1988}, if $K$ is any closed subgroup of $\bb{H}$ such that $K\backslash \bb{H}$ is a compact manifold, then there exists $k\in\bb{N}$ such that
	\begin{align*}
		K=\{(x,y,z/k)\in\bb{H}: x,y, z\in \bb{Z}\}.
	\end{align*}
Noting that $H$ is unimodular with the counting measure as the Haar measure, for any $f\in C^\infty_c(G)$
\begin{align*}
	f^H(m)=\sum_{h\in H} f(h\circ q(m)) \in C^\infty_c(M) \text{ for any } m\in M,
\end{align*}
where $q:M\to G$ is the cross section map.
Let $\mu_M$ be the $G$-invariant measure on $M$ inherited from the Haar measure on $G$ as described in Example~\ref{example:homogeneous_space}. Then by \cite[Equation~(6.23)]{DriverGrossSaloff-Coste2010}, for any $f\in C^\infty_c(M)$ and $g\in G$ we have 
\begin{align*}
	\Delta^M_{\cc H} (f^H)(Hg)=\sum_{h\in H}\Delta^G_{\cc H} f(h\circ g).
\end{align*}
Moreover, from the discussion in Example~\ref{example:homogeneous_space}, $\mu_M$ is symmetrizing for $\Delta^M_{\cc H}$, and therefore by Theorem~\ref{thm:projection} we get the following result.
\begin{corollary}
	Let $M=H\backslash G$ be the Heisenberg nil-manifold equipped with the sub-Riemannian structure induced by the natural projection map $\pi:G\longrightarrow H\backslash G$. Then \eqref{eq:GB} (resp.~\eqref{eq:RP}, \eqref{eq:RLS}, \eqref{eq:L-Y}) holds for $(M, \cc{H}_M, \langle\cdot,\cdot\rangle_M, \mu_M)$ with the same constants as in Proposition~\ref{prop:Heisenberg} with $n=1$.
\end{corollary}

\subsection{Grushin plane} Let $G=\bb{H}$, the $3$-dimensional Heisenberg group and $H=\{(0,y,0): y\in\bb{R}\}$ be the closed subgroup of $G$. Then, the Grushin plane can be defined as the homogeneous space $M=H\backslash G$. In this case, $M$ can be identified with $\bb{R}^2$ and the projection map $\pi: G\longrightarrow M$ is given by
\begin{align*}
	\pi(g)=\pi(x,y,z)=(x, z+\frac{1}{2}xy).
\end{align*}
Let 
\begin{align*}
\wt{X}=\frac{\partial}{\partial x}-\frac{y}{2}\frac{\partial}{\partial z}, \quad \wt{Y}=\frac{\partial}{\partial y}+\frac{x}{2}\frac{\partial}{\partial z}
\end{align*}
be the orthonormal frame in $\cc{H}_G$. Then, according to \cite[Section~4.2]{GordinaLuo2024}, $d\pi_g(\wt X)=\frac{\partial}{\partial x}$ and $d\pi_g(\wt Y)=x\frac{\partial}{\partial y}$. Moreover, with respect to the induced sub-Riemannian structure on $M$, the sub-Laplacian is given by 
 \begin{align*}
 	\Delta^M_{\cc H}=\frac{\partial^2}{\partial x^2}+x^2\frac{\partial^2}{\partial y^2},
 \end{align*}
and the measure $\mu_M$ in Example~\ref{example:homogeneous_space} is the Lebesgue measure on $\R^2$, see \cite[p.~464]{DriverGrossSaloff-Coste2009a}.
As a consequence of Theorem~\ref{thm:projection}, we have the following result.
\begin{corollary}
\eqref{eq:GB} (resp. \eqref{eq:RP}, \eqref{eq:RLS}, \eqref{eq:L-Y}) holds for the sub-Riemannian manifold $(M,\cc{H}_M, \langle\cdot, \cdot\rangle_{\cc H_M}, \mu_M)$ with the same constants as in Proposition~\ref{prop:Heisenberg} with $n=1$.
\end{corollary}

	\providecommand{\bysame}{\leavevmode\hbox to3em{\hrulefill}\thinspace}
\providecommand{\MR}{\relax\ifhmode\unskip\space\fi MR }
\providecommand{\MRhref}[2]{%
  \href{http://www.ams.org/mathscinet-getitem?mr=#1}{#2}
}
\providecommand{\href}[2]{#2}

\end{document}